\documentclass{amsart}
\usepackage{amsmath,amsfonts,amssymb,amsthm,hyperref}

\usepackage{color, tikz,enumitem}
\usepackage{bm}

\numberwithin{equation}{section}
\newtheorem{claim}{claim}[section]
\newtheorem{theorem}[claim]{Theorem}

\newtheorem{lemma}[claim]{Lemma}
\newtheorem{proposition}[claim]{Proposition}

\newtheorem{corollary}[claim]{Corollary}

\newtheorem{remark}[claim]{Remark}

\let\infp\relax \DeclareMathOperator*\infp{\vphantom{p}inf}

\newcommand{\ds}{\Omega_n}
\newcommand{\p}[1][p]{{\mathcal{S}_{#1}}}
\newcommand{\Pn}{\mathcal{P}_n}
\newcommand{\tr}{\operatorname{tr}}

\author{Ludovick Bouthat, Javad Mashreghi and Frédéric Morneau-Guérin}

  \title{On the Geometry of the Birkhoff Polytope \\ {II. The Schatten $\bm{p}$-norms}}

\begin{document}

\begin{abstract}
{In the first of this series of two articles, we studied some geometrical aspects of the Birkhoff polytope, the compact convex set of all $n \times n$ doubly stochastic matrices, namely the Chebyshev center, and the Chebyshev radius of the Birkhoff polytope associated with metrics induced by the operator norms from $\ell_n^p$ to $\ell_n^p$ for $1 \leq p \leq \infty$. In the present paper, we take another look at those very questions, but for a different family of matrix norms, namely the Schatten $p$-norms, for $1 \leq p < \infty$. While studying these properties, the intrinsic connection to the minimal trace, which naturally appears in the assignment problem, is also established.
}
\end{abstract}

\keywords{Doubly stochastic matrices, Birkhoff polytope, Chebyshev center, Chebyshev radius, Schatten $p$-norms}

\maketitle


\section{Introduction}



A square matrix is said to be \textit{doubly stochastic} if its entries are nonnegative and all its row and column sums are one. It is readily verified that the set of $n \times n$ doubly stochastic matrices is closed under matrix multiplication.

Note that a doubly stochastic matrix can be interpreted as the \textit{adjacency matrix} of a \textit{weighted directed graph} such that at each vertex the sum of the weights of the incoming edges as well as the sum of the weights of the outgoing edges are equal to one \cite{gharesifard2010does}. Through this association, doubly stochastic matrices play a key role in networked control problems (NSC) \cite{bullo2009distributed, ren2008distributed}.

The connection between such matrices and graph theory in particular, and combinatorics in general does not stop there. The celebrated Birkhoff's theorem asserts that every $n \times n$ doubly stochastic matrix can be written (though not necessarily in a unique way) as a convex combination of $n \times n$ permutation matrices, and the $n \times n$  permutation matrices are precisely the extreme points of $\Omega_n$, the set of $n \times n$ doubly stochastic matrices often referred to as the \textit{Birkhoff polytope}. This foundational result turns out to be logically equivalent to a long list of remarkably powerful theorem in combinatorics which includes the K\H{o}nig--Egerv\'ary theorem, K\H{o}nig's theorem, Menger's theorem, Dilworth's theorem as well as Ford and Fulkerson's \textit{max-flow min-cut} theorem \cite{reichmeider1984equivalence}.


In \cite{part1}, the geometry of the Birkhoff polytope is studied. More specifically, the Chebyshev center and the Chebyshev radius of $\ds$ relative to the operator norms from $\ell^p_n$ to $\ell^p_n$ ($1 \leq p \leq \infty$) is determined. These concepts are interesting from a theoretical point of view. However, they also have received attention in some practical settings, notably in two papers from 1998 by Glunt, Hayden and Reams \cite{Glunt1998} and by Khoury \cite{Khoury}. In the former, while studying numerical simulation of large linear semiconductor circuit networks, the authors naturally came across the following question: \textit{Given a matrix $B$ of order $n$ subject to the constraints $e_1^\intercal D^k e_1 = e_1^\intercal B^k e_1$, $k\geq1$, where $e_1 = (1,0,\dots,0)^\intercal$, which generalized doubly stochastic matrix $D$ is \emph{closest} to $B$ in the sense of Frobenius?} They were able to give an algorithm to numerically find the solution. In the latter, Khoury \cite{Khoury} independently studied the same question, bar the constraints mentioned above. He showed that $D = WBW+J_n$, where $W=I_n-J_n$ and $J_n$ is the $n\times n$ matrix with every entry uniformly equal to $1/n$. This line of investigation was later pursued by Glunt, Hayden and Reams in \cite{Glunt2008} and further developed in the more specific case of doubly stochastic matrices (as opposed to generalized doubly stochastic matrices) in \cite{MR2306262} by Bai, Chu and Tan in 2007.

The present article continues the line of research pursued in \cite{part1}. Therein, in the process of determining the Chebyshev center and radius of $\ds$ relative to the operator norm from $\ell^2_n$ to $\ell^2_n$, the case of the Frobenius norm is also addressed. A natural generalization of this norm is the \emph{Schatten $p$-norms}. Hence, in this paper, we propose to also determine the Chebyshev center and the Chebyshev radius of $\ds$, but this time relative to the Schatten $p$-norms ($1\leq p < \infty$). The case of the Schatten $\infty$-norm is omitted since it coincide with the operator norm from $\ell^2_n$ to $\ell^2_n$, which is already covered in \cite{part1}. In this paper, we also address the minimal bounding ball's radius for the Birkhoff polytope and the smallest enclosing ball problem.

In Section \ref{sec - def}, we establish some preliminary results. More precisely, we recall some basic definitions in Section \ref{subsec - def} and Section \ref{subsec - prop}, we present a few elementary properties of doubly stochastic matrices in Section \ref{sec - prop}, we state some basic facts about the minimal trace of a matrix in \ref{subsec - assign}, and we recall some general results on the Chebyshev radius proven in \cite{part1}.  
In Section \ref{sec - size}, we determine the minimum and maximum distance of an element of the Birkhoff polytope from the origin. In Section \ref{sec - ball}, we study the minimal bounding ball of the Birkhoff polytope. Among other things, we find an explicit formula for the radius of the smallest enclosing ball of $\ds$ centered at $D$ in the case of the Frobenius norm, for any doubly stochastic matrix $D$. In doing so, interesting connections with the assignment problem are highlighted. Finally, in Section \ref{sec - Chebyshev}, we study the Chebyshev center and the Chebyshev radius of $\ds$ relative to the Schatten $p$-norms.


\section{Definitions, Properties and Preliminary results} \label{sec - def}


\subsection{Basic definitions}
\label{subsec - def}

In this section, we recall the definition of some basic notions which are essential in this paper. Secondly, we state some well-known properties of doubly stochastic matrices. Thirdly, we define the minimal trace of a $n \times n$ matrix and discuss some of its elementary properties. Finally, we recall some known results about the Chebyshev center and radius of $\ds$ relative to a metric induced by a permutation-invariant norm.






\subsubsection*{Singular values}

Given an $m \times n$ matrix $A$, the $i$-th singular value of $A$ is given by
$$ 
\sigma_i(A) \,= \!\!\!\!\infp\limits_{\substack{ V \subseteq \mathbb{C}^n \\ \dim(V) = n-i+1}}\sup\limits_{\substack{x \in V \\ \|x\|_2 = 1}} \!\|Ax\|_2,\quad\quad (i = 1, 2, \dots, \min\{m,n\}).
$$
It is easy to see that
$$
\sigma_1(A) \,\geq\, \sigma_2(A) \,\geq\, \cdots \,\geq\, \sigma_{\min\{m,n\}}(A) \,\geq\, 0.
$$
Moreover, the singular values of $A$ are also given by the square root of the nonnegative eigenvalues of $A^*A$, where $A^*$ denotes the conjugate transpose of $A$. In this case, the singular values are not necessarily ordered and thus we assume without loss of generality that they are.

\subsubsection*{Schatten norms}

For $p \geq 1$, the Schatten $p$-norm of a $m \times n$ matrix $A$, denoted by $\|A\|_{\p}$, is defined as the $p$-norm of its singular values, that is
\[
\|A\|_{\mathcal{S}_p} \,:=\, \Bigg( \sum_{i=1}^{\min\{m,n\}} \!\!\sigma_i^p(A)\Bigg)^{\!1/p} \!\!
\]

The case $p=2$ yields the well-known \emph{Frobenius norm}, which admits an easier characterization. Indeed, this latter matrix norm is induced by the inner product $\langle A,B\rangle_{\text{F}} := \tr(A B^*)$, where $A$ and $B$ belong to $M_{m,n}(\mathbb{C})$. Hence, the Schatten $2$-norm of $A=[a_{ij}]_{m \times n}$ is also given by
\[
\|A\|_{\mathcal{S}_2} \,=\, \|A\|_{\text{F}} \,=\, \left( \sum_{i=1}^{m}\sum_{j=1}^{n} |a_{ij}|^2 \right)^{\!\!1/2} \!\!.
\]


\subsection{Useful properties of the Schatten \texorpdfstring{$p$}{p}-norms}
\label{subsec - prop}

In what follows, we present a few well-known features of the Schatten $p$-norms that we shall use later.

\subsubsection*{Monotonicity}

For $1 \leq p \leq q$,
\[
\|A\|_{\mathcal{S}_1} \,\geq\, \|A\|_{\mathcal{S}_p} \,\geq\, \|A\|_{\mathcal{S}_q}.
\]
This is an obvious consequence of well-known results on inclusions between $\ell^p$ spaces.

\subsubsection*{Permutation-invariance}\label{PINdef}

Recall that any complex $m \times n$ matrix $A$ admits a \textit{singular value decomposition}, that is a factorization of the form
\[
A \,=\, U \Sigma V^*,
\]
where $U$ is an $m \times m$ complex unitary matrix, $\Sigma$ is an $m \times n$ rectangular diagonal matrix with the singular values of $A$ on the diagonal, and $V^\ast$ is the conjugate transpose of a $n \times n$ complex unitary matrix $V$. For all $p \geq 1$, it is easy to verify that
\[
\|A\|_{\mathcal{S}_p} \,=\, \|\Sigma\|_{\mathcal{S}_p}.
\]
Similarly, the Schatten $p$-norm of any matrix having the same singular values as $A$ is equal to $\|\Sigma\|_{\mathcal{S}_p}$. This means that the Schatten $p$-norms are unitarily-invariant and, \textit{a fortiori}, permutation-invariant.

\subsubsection*{Submultiplicativity}

For all $p \geq 1$, the Schatten $p$-norm is \textit{submultiplicative}, that is
\[
\|AB\|_{\mathcal{S}_p} \,\leq\, \|A\|_{\mathcal{S}_p} \|B\|_{\mathcal{S}_p} 
\]
for all $p \geq 1$ and for every $m \times n$ matrix $A$ and every $n \times k$ matrix $B$.

To demonstrate this elementary fact, it suffices to show that it holds true for rectangular diagonal matrices $\Sigma_A$ and $\Sigma_B$ having for diagonal entries the singular values as $A$ and $B$, respectively, in decreasing order of modulus. Under these assumptions, we have
\begin{align*}
\|\Sigma_{AB}\|_{\mathcal{S}_p}\,&=\, \|\Sigma_A\Sigma_B\|_{\mathcal{S}_p} \\&=\,  \Bigg( \sum_{i=1}^n \!\! \big(\sigma_i(A)\sigma_i(B)\big)^p\Bigg)^{\!1/p} \\&\leq\,  \sigma_1(A) \cdot \Bigg( \sum_{i=1}^n \!\!\sigma_i(B)^p\Bigg)^{\!1/p} \\&\leq\,  \Bigg( \sum_{i=1}^n \!\!\sigma_i(A)^p\Bigg)^{\!1/p} \cdot \Bigg( \sum_{i=1}^n \!\!\sigma_i(B)^p\Bigg)^{\!1/p} \\&=\,  \|\Sigma_{A}\|_{\mathcal{S}_p} \|\Sigma_{B}\|_{\mathcal{S}_p}.
\end{align*}


\subsection{Elementary spectral properties of doubly stochastic matrices}
\label{sec - prop}

As we mentioned in the first part of this series of articles, the spectrum of a doubly stochastic matrix $D$ is bounded by 1, and it always includes the scalar 1 (associated to the eigenvector $e=(1,1,\dots,1)^\intercal$). As for the other eigenvalues, they have modulus 1 if and only if $D$ is a permutation matrix \cite[Theorem 5]{Perfect1965}.

Let us state some other useful results relating to doubly stochastic matrices which will be put to use in our studies.
\begin{enumerate}[label=(\roman*)] 
    \item\label{prop - conv-max} A convex real-valued function on $\Omega_n$ attains its maximum at a permutation matrix   \cite[Corollary 8.7.4]{HornJohnson2013}.
    \item $DJ_n = J_n D = J_n$ for every $n \times n$ doubly stochastic matrix $D$, where $J_n$ is the $n \times n$ matrix where every entry is equal to $1/n$. 
    \item\label{lem - bonus3} $J_n$ is the uniform convex combination of all the $n \times n$ permutation matrices, i.e., $J_n = \frac{1}{n!} \sum_{P\in\Pn} \!P$ \cite{part1}.
\end{enumerate}







\subsection{The minimal trace of a matrix}
\label{subsec - assign}

The {\em minimal trace} of an $n \times n$ matrix $A$ is
\[
\tr_{\min}(A)\,:=\, \min_{P\in \Pn} \tr(AP).
\]
Alternatively, it can be defined as the minimal diagonal sum of $A$, i.e., 
\[
\tr_{\min}(A) \,= \min_{\sigma \in \text{Sym}(n)} \sum\limits_{i=1}^n a_{i\sigma(i)},
\]
where $\text{Sym}(n)$ denotes the full symmetric group of degree $n$, that is the group whose elements are the bijections from the set $\{1, 2, \dots, n\}$ onto itself.

The function $\tr_{\min} : M_n(\mathbb{R}) \rightarrow \mathbb{R}$, as well as its restriction to $\ds$, were extensively studied by Wang \cite{Wang1974}. It is also closely related to the \emph{assignment problem}, which is a fundamental combinatorial optimization problem \cite{Burkard2012}. Indeed, finding the value of $\tr_{\min}(A)$ is equivalent to solving an assignment problem associated with the matrix $A$. This, in turn, can be done with the $O(n^3)$ time algorithm known as the \emph{Hungarian algorithm} \cite{MR0266680}.

We now state a result, due to Wang \cite[Proposition 1.1]{Wang1974}, which provides interesting upper and lower bounds for the minimal trace of a doubly stochastic matrix. We also provide a complete and new elementary proof.

\begin{lemma}\label{borne_m(D)}
The minimal trace of a doubly stochastic matrix lies on the closed unit interval, i.e., $0 \leqslant \tr_{\min}(D) \leqslant 1$ for all $D \in \ds$. Moreover, both these bounds are sharp. Indeed, 
\begin{enumerate}[label=(\roman*)]
\item $\tr_{\min}(D)=0$ if and only if $D$ has a diagonal consisting entirely of zeroes;
\item $\tr_{\min}(D)=1$ if and only if $D=J_n$.
\end{enumerate}
\end{lemma}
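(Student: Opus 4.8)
The plan is to treat the two inequalities and their equality cases in turn, with the substance lying in the equality characterisation of the upper bound.

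The lower bound $\tr_{\min}(D) \geq 0$ is immediate, since the entries of a doubly stochastic matrix are nonnegative and hence every diagonal sum $\sum_i d_{i\sigma(i)}$ is a sum of nonnegative terms. For the upper bound I would use an averaging argument based on property \ref{lem - bonus3}. Writing $\tr_{\min}(D) = \min_{P \in \Pn} \tr(DP)$, the minimum is bounded above by the average of $\tr(DP)$ over all $n!$ permutation matrices, which by linearity of the trace equals $\tr\big(D \cdot \tfrac{1}{n!}\sum_{P \in \Pn} P\big) = \tr(D J_n) = \tr(J_n) = 1$. Thus $\tr_{\min}(D) \leq 1$.

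Part (i) then follows directly from nonnegativity: a sum of nonnegative terms is zero exactly when each term vanishes, so $\tr_{\min}(D) = 0$ precisely when some permutation $\sigma$ has $d_{i\sigma(i)} = 0$ for every $i$, i.e. $D$ possesses an all-zero diagonal. The ``if'' half of part (ii) is equally immediate, as every diagonal sum of $J_n$ equals $n \cdot \tfrac{1}{n} = 1$.

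The heart of the argument is the ``only if'' half of (ii). The averaging computation does more than bound the minimum: since the mean of the diagonal sums is $1$ and their minimum is assumed to be $1$, every diagonal sum must in fact equal $1$. I would then extract rigidity from this by comparing a permutation $\sigma$ with the permutation $\sigma' = \sigma \circ (k\,l)$ obtained by composing $\sigma$ with the transposition $(k\,l)$; equating their (equal) diagonal sums gives $d_{k\,\sigma(k)} + d_{l\,\sigma(l)} = d_{k\,\sigma(l)} + d_{l\,\sigma(k)}$. Since $\sigma$ can be chosen to send $k, l$ to any prescribed distinct columns $a, b$, this yields $d_{ka} - d_{kb} = d_{la} - d_{lb}$ for all distinct rows $k, l$ and columns $a, b$, so the difference $d_{ka} - d_{kb}$ is independent of the row index. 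Summing that constant over all $n$ rows and invoking the column sums $\sum_k d_{ka} = \sum_k d_{kb} = 1$ forces the constant to be $0$; hence the entries in each row are all equal, and the row-sum condition pins each to $1/n$, giving $D = J_n$. I expect the most delicate point to be this final rigidity step, specifically justifying that the transposition identity holds for \emph{every} pair of distinct columns (by realising arbitrary prescribed images through a suitable choice of $\sigma$) before the column-sum cancellation can be applied.
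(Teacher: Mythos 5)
Your proof is correct, but it takes a genuinely different route from the paper's. For the upper bound, the paper uses a local-optimality (exchange) argument: taking a minimizing permutation $P$ and writing $DP=[a_{ij}]$, the inequality $\tr(DP)\leq \tr(DPT_{ij})$ for transpositions $T_{ij}$ yields $a_{ii}+a_{jj}\leq a_{ij}+a_{ji}$, and summing over all $i,j$ gives $\tr_{\min}(D)\leq 1$. You instead bound the minimum by the mean: $\tr_{\min}(D)\leq \frac{1}{n!}\sum_{P\in\Pn}\tr(DP)=\tr(DJ_n)=\tr(J_n)=1$, which is shorter and makes the role of $J_n=\frac{1}{n!}\sum_{P\in\Pn}P$ completely transparent. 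The equality analyses then diverge accordingly: the paper extracts equality in its exchange inequalities for the fixed minimizing $P$, first forcing the diagonal entries of $DP$ to be $1/n$ and then propagating to all entries; you observe that ``minimum equals mean'' forces \emph{every} diagonal sum of $D$ itself to equal $1$, and then run a transposition comparison $\sigma\mapsto\sigma\circ(k\,l)$ directly on $D$ to get $d_{ka}-d_{kb}=d_{la}-d_{lb}$ for all distinct rows $k,l$ and columns $a,b$, after which the column sums kill the common difference and the row sums pin every entry to $1/n$. The delicate point you flagged --- that for distinct $k,l$ and distinct $a,b$ one can choose $\sigma$ with $\sigma(k)=a$, $\sigma(l)=b$ --- is indeed the step that needs saying, and it holds trivially by extending the partial assignment to a bijection. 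What each approach buys: yours gets the bound and the rigidity with less bookkeeping and never needs to pass to the auxiliary matrix $DP$; the paper's is self-contained in that it does not invoke the decomposition of $J_n$ as the average of all permutation matrices, and its intermediate inequality $a_{ii}+a_{jj}\leq a_{ij}+a_{ji}$ does double duty in both the bound and the equality case.
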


\begin{proof}
The lower bound is an immediate consequence of the non-negativity of all the entries of $D$. As for the necessary and sufficient condition for equality to hold, it is trivial.

Let us now turn our attention towards the upper bound. Let $P$ be a permutation matrix for which the minimal trace of $D$ is realized and let $T_{ij}$ be the permutation matrix which swap the $i$-th and $j$-th columns with respect to \emph{right} multiplication. Observe that if $DP=[a_{ij}]$, the minimality of $P$ implies that
\[
\sum_{k=1}^n a_{kk} \,=\, \tr(DP) \,\leq\, \tr(DP T_{ij}) \,=\, a_{ij}+a_{ji} + \sum_{\smash{\underset{k\neq i,j}{k=1}}}^n a_{kk}
\]
for each $1\leq i, j \leq n$, $i\neq j$. It follows that
\[
a_{ii}+a_{jj} \,=\, \sum_{k=1}^n a_{kk}-\sum_{\smash{\underset{k\neq i,j}{k=1}}}^n a_{kk} \,\leq\,  a_{ij}+a_{ji},
\]
which trivially holds even if $i = j$. By summing over all $i$ and $j$ we get, on the one hand,
\[
\sum_{i,j=1}^n (a_{ii}+a_{jj}) \,=\, n \sum_{i=1}^n a_{ii} + n\sum_{j=1}^n a_{jj} \,=\, 2n\tr(DP) \,=\, 2n\tr_{\min}(D),
\]
and, on the other hand,
$$\sum_{i,j=1}^n (a_{ij}+a_{ji}) \,=\, 2\sum_{i,j=1}^n a_{ij} \,=\, 2n.$$
Therefore,
\begin{equation}\label{eq - 2nm(D)=2n}
2n\tr_{\min}(D) \,=\, \sum_{i,j=1}^n (a_{ii}+a_{jj}) \,\leq\, \sum_{i,j=1}^n (a_{ij}+a_{ji}) \,=\, 2n.
\end{equation}
Hence $\tr_{\min}(D)\leq 1$.

It only remains to discuss the conditions under which this upper bound is attained. One can easily check, by direct verification, that $\tr_{\min}(J_n)=1$. To complete the proof, suppose that $D$ is a doubly stochastic matrix $D$ such that $\tr_{\min}(D)=1$ and assume that the minimal trace of $D$ is realized by $P\in \Pn$. Set $DP=[a_{ij}]$. Under our assumption that $\tr_{\min}(D)=1$, equality must hold in \eqref{eq - 2nm(D)=2n}. But this happens if and only if 
\begin{equation}\label{E:ii=jj}
a_{ii}+a_{jj} \,=\, a_{ij}+ a_{ji}
\end{equation}
for all $1\leq i,j \leq n$. Summing up over all $i$, we obtain 
\[
\sum_{i=1}^n (a_{ii}+a_{jj}) \,=\, \tr_{\min}(D) + na_{jj} \,=\, 1 + na_{jj}.
\]
on the left-hand side, and
\[
\sum_{i=1}^n (a_{ij}+a_{ji}) \,=\, 1+1 \,=\, 2
\]
on the right-hand side. Hence $a_{jj}=1/n$ for all $1\leq j \leq n$. 

Now, let $\sigma \in \text{Sym}(n)$ be an arbitrary permutation of $\{1,2,\dots,n\}$. Then, by \eqref{E:ii=jj}, 
\[
2\tr_{\min}(D) = \sum_{i=1}^n (a_{ii}+a_{\sigma_i\sigma_i}) = \sum_{i=1}^n(a_{i\sigma_{i}}+a_{\sigma_ii}) = \tr(DQ) + \tr(Q^* D) \geq 2\tr_{\min}(D),
\]
where $Q$ is the permutation matrix corresponding to the permutation $\sigma$. It follows that $\tr(DQ)=\tr(Q^* D) = \tr_{\min}(D) = 1$ and thus, for each permutation $\sigma$, by the above discussion we must have $a_{i\sigma_{i}}=1/n$ for all $1\leq i \leq n$. Hence, $a_{ij}=1/n$, for every $1\leq i,j \leq n$.
\end{proof}

\subsection{Geometric notions}

\subsubsection*{The Minimal Bounding Ball}


In a metric space $(\mathcal{U},d)$, if $\mathcal{B}\subseteq \mathcal{U}$ is nonempty, closed and bounded, and $B(x,r)$ represents the closed ball with radius $r>0$ centered at $x\in \mathcal{U}$, then we call $B(x,r)$ a \emph{bounding ball} of $\mathcal{B}$ if $\mathcal{B} \subseteq B(x,r)$. The smallest radius $r$ such that $B(x,r)$ is a \emph{bounding ball} of $\mathcal{B}$ is called the \emph{minimal radius of a bounding ball of $\mathcal{B}$ centered at $x$}, denoted by $r_d(x)$.

\begin{figure}[ht]
	\pgfmathsetseed{15946}
	\centering
	\begin{tikzpicture}[scale=1.35]
		\filldraw[fill=lightgray] plot [smooth cycle, samples=5,domain={1:5}] (\x*360/5+5*rnd:0.5cm+1cm*rnd) node at (0,0) {};
		\fill(1,-0.4)node[below,xshift=-3.2cm,yshift=0.3cm]{$\mathcal{B}$} circle (0cm);
		\draw[dashed](-0.4,0.1) circle (1.48);
		\fill(-0.4,0.1)node[below,xshift=-.2cm,yshift=.05cm]{$x$} circle (.04cm);

		\draw(-0.4,0.1)--(1.075,0.01) node[midway,sloped,yshift=-0.28cm]{$r_d(x)$};
	\end{tikzpicture}
	\caption{The smallest enclosing ball of the nonempty closed bounded set $\mathcal{B}$ centered at $x$ with respect to the metric $d$.}
	\label{fig - Psi}
\end{figure}
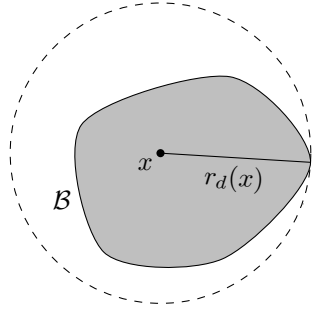  

In this paper, the ambiant space is always $M_n(\mathbb{R})$ and the nonempty, closed and bounded set $\mathcal{B}$ shall always be the Birkhoff polytope $\ds$. Moreover, the metric will also be the one induced by the Schatten $p$-norm. Hence, we have
\begin{align}\label{def: r(x)}
    r_{\|\cdot\|}(A) \,=\, \sup_{B\in \ds} \|A-B\|.
\end{align}
As a matter of fact, the set $\ds$ being compact, the supremum in \eqref{def: r(x)} can be replaced by a maximum, and it follows that the minimal radius of a bounding ball of $\ds$ centered at $A$ exists and is attained for every $A \in M_n(\mathbb{R})$.

\subsubsection*{The Chebyshev radius and center}

The \emph{Smallest Enclosing Ball Problem} is a fundamental geometry question that extends the \emph{Smallest Enclosing Circle Problem} originally proposed by 19th-century mathematician James Joseph Sylvester \cite{sylvester1857question}. Given a metric space $(\mathcal{U},d)$, with a nonempty closed constraint set $\mathcal{R}$ and a nonempty closed, bounded set $\mathcal{B}$, the problem consists in finding the smallest radius $r \geq 0$ and a point $x \in \mathcal{R}$ such that $\mathcal{B}$ is entirely contained within the ball $B(x,r)$, i.e., $\mathcal{B} \subseteq B(x,r)$ and this radius $r$ is the smallest possible. This can be intuitively stated as finding the minimal bounding ball of $\mathcal{B}$ centered at some point in the constraint set $\mathcal{R}$ and relative to the metric space $(\mathcal{U},d)$. 

The minimal bounding ball for $\mathcal{B}$ concerning the metric space $(\mathcal{U},d)$ and the constraint $\mathcal{R}$ may not always exist, and if it does, it might not be unique. When it exists, the radius $r$ is known as the \emph{Chebyshev radius} of $\mathcal{B}$ relative to the metric space $(\mathcal{U},d)$ and the constraint set $\mathcal{R}$, denoted by $R_d(\mathcal{B})$. A point $x \in \mathcal{R}$ that achieves this is called a \emph{Chebyshev center} of $\mathcal{B}$ relative to the metric space $(\mathcal{U},d)$ and the constraint $\mathcal{R}$. Using this notation, observe that we have
$$
R_d(\mathcal{B}) \,=\, \infp_{x\in \mathcal{R}} \sup_{y\in \mathcal{B}} d(x,y) \,=\, \inf_{x\in \mathcal{R}} r_d(x).
$$
When $\mathcal{R}=\mathcal{U}$, the problem is referred to as \emph{unconstrained}. In this setting, we focus on the metric space $(\mathcal{R},d)$, and we define $B(x,r)$ as the minimal bounding ball of $\mathcal{B}$ in relation to the metric space $(\mathcal{R},d)$ (similarly for Chebyshev radius and centers). It's important to note that in these cases, we require $\mathcal{B} \subseteq \mathcal{R}$, unlike the general case where the constraint set $\mathcal{R}$ can be unrelated to $\mathcal{B}$.

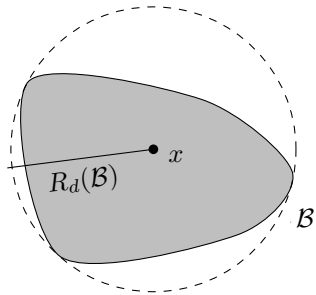
\begin{figure}[ht]
	\pgfmathsetseed{1594627271}
	\centering
	\begin{tikzpicture}[scale=1.6]
		\filldraw[fill=lightgray] plot [smooth cycle, samples=5,domain={1:5}] (\x*360/5+5*rnd:0.5cm+1cm*rnd) node at (0,0) {};
		\fill(1,-0.4)node[below,xshift=.2cm,yshift=.3cm]{$\mathcal{B}$} circle (0cm);
		\draw[dashed](-0.1359,0.205)node[below,xshift=.2cm]{} circle (1.1844);
		\fill(-0.1359,0.205)node[below,xshift=.3cm,yshift=.1cm]{$x$} circle (.04cm);¸
		
		\draw(-0.1359,0.205)--(-1.35,0.05) node[midway,sloped,yshift=-0.3cm]{$R_d(\mathcal{B})$};
		
	\end{tikzpicture}
	\caption{The minimal bounding ball of the nonempty closed, bounded set $\mathcal{B}$ with respect to the metric $d$.}
	\label{fig:centre+rayon}
\end{figure}

More than a century after it has been posed (albeit in a more circumscribed form), the Smallest Enclosing Ball Problem remains an active area of research (see \cite{MR2343160,Mordukhovich2013} and the references therein).
Before beginning our study of the Chebyshev centers and the Chebyshev radius of the Birkhoff polytope relative to the Schatten $p$-norms, we need to recall some known general results. 

First notice that the minimal bounding ball of a set $\mathcal{B}$ does not always exist. 
For instance, in the 2-dimensional Euclidean space with the constraint set $\mathcal{R}=B(0,1)$, where $B(0,1)$ denote the open unit ball, it is clear that the minimal bounding ball of $\mathcal{B}=\{(2,0)\}$ does not exist. Hence, 
we begin by stating some sufficient conditions, due to Mordukhovich, Nguyen Mau and Villalobos \cite{Mordukhovich2013}, guaranteeing the \textit{existence} of a minimal bounding ball of $\mathcal{B}$.

\begin{proposition}\cite[Theorem 1]{Mordukhovich2013}\label{prop - exist}
    Let $\mathcal{B}$ be a nonempty closed bounded subset in the normed vector space $(V,\|\cdot\|)$. Suppose one of the following holds:
    \begin{enumerate}[label=(\roman*)]
        \item The constraint set $\mathcal{R}\subseteq V$ is nonempty and compact.
        \item The normed vector space $(V,\|\cdot\|)$ is a reflexive Banach space and the constraint set $\mathcal{R}\subseteq V$ is weakly closed.
    \end{enumerate}
    Then there exists a minimal bounding ball of $\mathcal{B}$ relative to $(V,\|\cdot\|)$ and the constraint set $\mathcal{R}$.
\end{proposition}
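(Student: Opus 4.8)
The plan is to recast the problem as the minimization of the radius function
\[
\rho(x) \,:=\, \sup_{y \in \mathcal{B}} \|x - y\|, \qquad x \in \mathcal{R},
\]
over the constraint set $\mathcal{R}$, since a minimal bounding ball of $\mathcal{B}$ relative to $\mathcal{R}$ is precisely a ball $B(x^*,\rho(x^*))$ whose center $x^* \in \mathcal{R}$ minimizes $\rho$. Because $\mathcal{B}$ is bounded, $\rho(x) < \infty$ for every $x$, so $\rho$ is a well-defined real-valued function, and I would first record two structural properties of it. It is $1$-Lipschitz: from $\|x - y\| \leq \|x - x'\| + \|x' - y\|$ one takes the supremum over $y \in \mathcal{B}$ to obtain $\rho(x) \leq \|x - x'\| + \rho(x')$, and by symmetry $|\rho(x) - \rho(x')| \leq \|x - x'\|$; in particular $\rho$ is continuous. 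Moreover $\rho$ is convex, being a pointwise supremum of the convex functions $x \mapsto \|x - y\|$.

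Under hypothesis (i) the conclusion is immediate: $\rho$ is a continuous function on the nonempty compact set $\mathcal{R}$, hence attains its minimum at some $x^* \in \mathcal{R}$, and $B(x^*,\rho(x^*))$ is the desired minimal bounding ball.

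Under hypothesis (ii) I would run a direct-method argument. Set $R := \inf_{x \in \mathcal{R}} \rho(x)$ and choose a minimizing sequence $(x_n) \subseteq \mathcal{R}$ with $\rho(x_n) \to R$. The key preliminary observation is that this sequence is bounded: fixing any $y_0 \in \mathcal{B}$, possible since $\mathcal{B} \neq \emptyset$, we have $\|x_n\| \leq \|y_0\| + \|x_n - y_0\| \leq \|y_0\| + \rho(x_n)$, and the right-hand side is bounded because $\rho(x_n)$ converges. Since $V$ is reflexive, bounded sequences admit weakly convergent subsequences, so after passing to a subsequence $x_n \rightharpoonup x^*$; as $\mathcal{R}$ is weakly closed, $x^* \in \mathcal{R}$. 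Finally, because $\rho$ is convex and norm-continuous, its sublevel sets are convex and closed, hence weakly closed, so $\rho$ is weakly lower semicontinuous; this gives $\rho(x^*) \leq \liminf_n \rho(x_n) = R$. Combined with $\rho(x^*) \geq R$, we conclude $\rho(x^*) = R$, so $B(x^*,R)$ is a minimal bounding ball.

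The main obstacle lies in case (ii), where one must simultaneously guarantee that a minimizing sequence stays bounded — which is where the nonemptiness of $\mathcal{B}$ enters essentially — and upgrade the norm-continuity of $\rho$ to weak lower semicontinuity. The latter rests on the convexity of $\rho$ together with Mazur's lemma, that closed convex sets are weakly closed, without which reflexivity alone would not let the weak limit survive the passage to the infimum.
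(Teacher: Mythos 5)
Your proof is correct. Note that the paper provides no proof of this proposition at all: it is quoted directly from the cited source \cite{Mordukhovich2013}, so there is no internal argument to compare against. Your direct-method proof --- Weierstrass for (i); and for (ii) a minimizing sequence, bounded because $\mathcal{B}\neq\emptyset$ pins down $\|x_n\| \leq \|y_0\|+\rho(x_n)$, weak sequential compactness from reflexivity, weak closedness of $\mathcal{R}$, and weak lower semicontinuity of the convex, $1$-Lipschitz radius function via Mazur --- is the standard route and is essentially the argument of that reference. One pedantic remark: hypothesis (ii) as stated omits the requirement that $\mathcal{R}$ be nonempty, which your proof necessarily uses when extracting a minimizing sequence; that is an omission in the statement, not a gap in your argument.
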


If a minimal bounding ball of $\mathcal{B}$ exists, its uniqueness is not necessarily guaranteed. For example, in the 2-dimensional Euclidean space with $\mathcal{R}=\{(x,y): x^2+y^2=1\}$ and $\mathcal{B}=\{(0,0)\}$, it is clear that a closed unit ball centered at any point of $\mathcal{R}$ is a minimal bounding ball of $\mathcal{B}$. Hence,
we also present a sufficient condition to guarantee the \textit{uniqueness} of the minimal bounding ball of $\mathcal{B}$ in the context of \textit{strictly convex normed vector spaces}, i.e., a normed vector space where $x \neq y$ imply that
\[
\|\lambda x+(1-\lambda)y\| \,<\, \lambda \|x\| + (1-\lambda)\|y\|
\]
for all $0 < \lambda < 1$.

\begin{proposition}\label{prop - uniqueness}
Let $\mathcal{B}$ be a nonempty compact subset in the strictly convex normed vector space $(V,\|\cdot\|)$ and let $\mathcal{R}\subseteq V$ be a nonempty closed, convex constraint set. If there exists a Chebyshev center of $\mathcal{B}$ relative to $(V,\|\cdot\|)$ and the constraint set $\mathcal{R}$, then it is unique.
\end{proposition}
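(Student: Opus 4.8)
The plan is to argue by contradiction, exploiting strict convexity at the midpoint of two putative centers. Suppose $x_1, x_2 \in \mathcal{R}$ are both Chebyshev centers of $\mathcal{B}$, so that $r_d(x_1) = r_d(x_2) = R_d(\mathcal{B}) =: R$, and assume toward a contradiction that $x_1 \neq x_2$. Since $\mathcal{R}$ is convex, the midpoint $x_0 := \tfrac{1}{2}(x_1 + x_2)$ again lies in $\mathcal{R}$, so it is an admissible center; in particular $R = \inf_{x \in \mathcal{R}} r_d(x) \leq r_d(x_0)$. I will show that in fact $r_d(x_0) < R$, which is the desired contradiction.

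First I would fix an arbitrary $y \in \mathcal{B}$ and set $u := x_1 - y$ and $v := x_2 - y$. Because $x_1 \neq x_2$, we have $u \neq v$, while by definition of the radius $\|u\| \leq R$ and $\|v\| \leq R$. Applying strict convexity of $(V,\|\cdot\|)$ with $\lambda = 1/2$ then yields
\[
\|x_0 - y\| \,=\, \left\| \tfrac{1}{2} u + \tfrac{1}{2} v \right\| \,<\, \tfrac{1}{2}\|u\| + \tfrac{1}{2}\|v\| \,\leq\, R.
\]
Thus $\|x_0 - y\| < R$ for every single $y \in \mathcal{B}$.

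The delicate point, and the step where compactness is essential, is to upgrade this pointwise strict inequality to a strict inequality for the supremum defining $r_d(x_0) = \sup_{y \in \mathcal{B}} \|x_0 - y\|$. A pointwise bound $\|x_0 - y\| < R$ on its own does not preclude the supremum from equalling $R$, since the gap could shrink to zero along a sequence in $\mathcal{B}$. However, $\mathcal{B}$ is compact and the map $y \mapsto \|x_0 - y\|$ is continuous, so the supremum is attained at some $y^\ast \in \mathcal{B}$; hence $r_d(x_0) = \|x_0 - y^\ast\| < R$. This contradicts $R \leq r_d(x_0)$, and therefore $x_1 = x_2$.

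I expect the only genuine obstacle to be this passage from the pointwise estimate to the uniform one; everything else is a direct application of the definitions of strict convexity and of convexity of $\mathcal{R}$. Note that the existence hypothesis is used only to produce the two centers to compare, and the argument is insensitive to the ambient dimension.
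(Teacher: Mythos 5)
The paper states this proposition without proof (it appears as the uniqueness companion to the existence result quoted from Mordukhovich et al.), so there is no in-paper argument to compare yours against; judged on its own merits, your proof is the standard midpoint argument and its structure is sound. You correctly identify the two pillars: convexity of $\mathcal{R}$ makes the midpoint $x_0=\tfrac12(x_1+x_2)$ admissible, and compactness of $\mathcal{B}$ (with continuity of $y\mapsto\|x_0-y\|$) is what upgrades the pointwise strict inequality to $r_d(x_0)<R$. That last point is indeed where ``compact'' rather than merely ``closed and bounded'' earns its keep; in infinite-dimensional strictly convex spaces the conclusion can fail without it.

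One caveat concerns the strict-convexity step. You apply the paper's displayed definition verbatim: $u\neq v$ implies $\|\tfrac12 u+\tfrac12 v\|<\tfrac12\|u\|+\tfrac12\|v\|$. Taken literally that definition is vacuous---no normed space satisfies it (take $v=2u$ with $u\neq0$; equality holds)---so the definition the paper must intend is the standard one, in which strictness is guaranteed only for distinct points of the unit sphere (equivalently: equality in the triangle inequality forces the vectors to be nonnegative multiples of one another). Under that correct definition, your inequality $\|\tfrac12 u+\tfrac12 v\|<\tfrac12\|u\|+\tfrac12\|v\|$ can fail, namely when $u=cv$ with $c>0$, $c\neq1$. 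The fix is a one-line case split: if $\|u\|<R$ or $\|v\|<R$, then $\|\tfrac12 u+\tfrac12 v\|\leq\tfrac12\|u\|+\tfrac12\|v\|<R$ by the triangle inequality alone, no strict convexity needed; if $\|u\|=\|v\|=R>0$, then $u/R$ and $v/R$ are distinct unit vectors and strict convexity gives $\|\tfrac12 u+\tfrac12 v\|<R$; and $R=0$ is impossible when $u\neq v$. With this adjustment your argument is airtight for the notion of strict convexity that the Schatten $p$-norms, $1<p<\infty$, actually satisfy, which is what the paper needs in Theorem \ref{thm - rad_s1}.
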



Before moving on to the next result, let us recall that the set $\mathcal{K} \subset M_n$ is said to be \textit{permutation-invariant} if $PKQ\in\mathcal{K}$ for any $K\in\mathcal{K}$ and any permutation matrices $P, Q \in \Pn$. Observe that the sets $\ds$ and $M_n(\mathbb{R})$ are both permutation-invariant.

For the remainder of this section, the results will concern the Chebyshev centers and radius of $\ds$ in the context of a metric induced by a permutation-invariant norm and a constraint set which is also permutation-invariant. 

\begin{theorem}\cite[Theorem 6.4]{part1}\label{prop - centre J general}
    Let $\mathcal{R} \subseteq M_n(\mathbb{R})$ be a convex permutation-invariant constraint set and let $\|\cdot\|$ be a permutation-invariant norm on $M_n(\mathbb{R})$. If there exist a Chebyshev center $A$ of $\ds$ relative to the metric space $(M_n(\mathbb{R}),\|\cdot\|)$ and the constraint set $\mathcal{R}$, then the matrix $J_nAJ_n =  \big(\frac{1}{n}\sum_{i,j=1}^n a_{ij} \big)J_n$ is also a Chebyshev center of $\ds$ relative to the aforementioned metric space and constraint set. Moreover, the Chebyshev radius of $\ds$ in this setting is given by
    \[
    R_{\|\cdot \|}(\ds) \,=\,\|J_nAJ_n-I_n\| \,=\, \inf_{\!\!\substack{\alpha\in\mathbb{R}\\ \alpha J_n \in \mathcal{R}}\!}\|\alpha J_n-I_n\|
    \vspace{-2pt}
    \]
    and the infimum is attained by $\alpha = \frac{1}{n}\sum_{i,j=1}^n a_{ij}$.
\end{theorem}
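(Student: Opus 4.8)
The plan is to exploit the symmetry of the problem through a symmetrization (averaging) argument, built on two structural facts about the radius functional $C \mapsto r_{\|\cdot\|}(C) = \sup_{B\in\ds}\|C-B\|$ introduced in \eqref{def: r(x)}. First, $r_{\|\cdot\|}$ is \emph{convex}, being a supremum of the convex maps $C \mapsto \|C-B\|$. Second, it is \emph{invariant} under left and right multiplication by permutation matrices: since $\ds$ is permutation-invariant we have $S\ds T = \ds$ for any $S,T\in\Pn$, so substituting $B = SB'T$ gives
\[
r_{\|\cdot\|}(SCT) = \sup_{B\in\ds}\|SCT - SBT\| = \sup_{B'\in\ds}\|S(C-B')T\| = r_{\|\cdot\|}(C),
\]
where the last equality uses the permutation-invariance of the norm.

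Next I would record that $J_nAJ_n$ is a scalar multiple of $J_n$: a direct entrywise computation gives $J_nAJ_n = \big(\tfrac1n\sum_{i,j}a_{ij}\big)J_n = \alpha J_n$. Using the identity $J_n = \tfrac{1}{n!}\sum_{P\in\Pn}P$ from property \ref{lem - bonus3}, I would then write
\[
J_nAJ_n = \frac{1}{(n!)^2}\sum_{P,Q\in\Pn} PAQ,
\]
exhibiting $\alpha J_n$ as a convex combination of the matrices $PAQ$. Since $A\in\mathcal{R}$ and $\mathcal{R}$ is permutation-invariant, each $PAQ\in\mathcal{R}$, and the convexity of $\mathcal{R}$ places $\alpha J_n = J_nAJ_n$ in $\mathcal{R}$. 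Combining this with the two facts above yields that $J_nAJ_n$ is a Chebyshev center: by convexity of $r_{\|\cdot\|}$ (Jensen's inequality) and its permutation-invariance,
\[
r_{\|\cdot\|}(J_nAJ_n) \le \frac{1}{(n!)^2}\sum_{P,Q} r_{\|\cdot\|}(PAQ) = r_{\|\cdot\|}(A) = R_{\|\cdot\|}(\ds),
\]
while the membership $J_nAJ_n\in\mathcal{R}$ forces $r_{\|\cdot\|}(J_nAJ_n)\ge R_{\|\cdot\|}(\ds)$ by the definition of the Chebyshev radius as an infimum of $r_{\|\cdot\|}$ over $\mathcal{R}$. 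Hence equality holds.

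It then remains to evaluate $r_{\|\cdot\|}$ on scalar multiples of $J_n$ and to derive the closed-form infimum. For any scalar $c$, the map $B\mapsto\|cJ_n - B\|$ is convex on $\ds$ and so attains its maximum at a permutation matrix (property \ref{prop - conv-max}); moreover, since $PJ_n = J_n$ we have $cJ_n - P = P(cJ_n - I_n)$, whence permutation-invariance of the norm gives $\|cJ_n - P\| = \|cJ_n - I_n\|$ independently of $P$. Therefore $r_{\|\cdot\|}(cJ_n) = \|cJ_n - I_n\|$, and applied at $c=\alpha$ this produces $R_{\|\cdot\|}(\ds) = \|J_nAJ_n - I_n\|$. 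For the infimum: $\alpha$ is admissible because $\alpha J_n\in\mathcal{R}$, which gives the upper bound; conversely, any admissible $\alpha'$ has $\alpha'J_n\in\mathcal{R}$, so $\|\alpha'J_n - I_n\| = r_{\|\cdot\|}(\alpha'J_n) \ge R_{\|\cdot\|}(\ds)$, yielding the matching lower bound and showing the infimum is attained at $\alpha$.

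I expect the crux to be the symmetrization inequality $r_{\|\cdot\|}(J_nAJ_n)\le r_{\|\cdot\|}(A)$, which is precisely where the convexity and the permutation-invariance of the radius functional must be marshalled together. Once that reduction to the symmetric point $\alpha J_n$ is secured, together with the collapse of $r_{\|\cdot\|}(cJ_n)$ to the single norm $\|cJ_n - I_n\|$, the remaining steps are routine.
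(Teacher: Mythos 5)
Your proof is correct: the convexity and permutation-invariance of the radius functional $r_{\|\cdot\|}$, membership of $J_nAJ_n=\alpha J_n$ in $\mathcal{R}$ via the convex combination $\frac{1}{(n!)^2}\sum_{P,Q}PAQ$, the Jensen-type symmetrization inequality, and the collapse $r_{\|\cdot\|}(cJ_n)=\|cJ_n-I_n\|$ (using property \ref{prop - conv-max} and $PJ_n=J_n$) together yield every assertion of the theorem, including attainment of the infimum. The paper itself does not reproduce a proof here --- it cites Theorem 6.4 of the first article in the series --- but your symmetrization argument is precisely the natural one for this statement, so there is nothing to flag.
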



As a direct consequence of Theorem \ref{prop - centre J general}, we also have the following corollary which states that, under mild conditions, any Chebyshev center of $\ds$ is equidistant to every permutation matrix.

\begin{corollary}\cite[Corollary 6.5]{part1}\label{lem - perm}
Let $\mathcal{R} \subseteq M_n(\mathbb{R})$ be a  convex permutation-invariant constraint set and let $\|\cdot\|$ be a permutation-invariant norm on $M_n(\mathbb{R})$. If there exist a Chebyshev center $A$ of $\ds$ relative to the metric space $(M_n(\mathbb{R}),\|\cdot\|)$ and the constraint set $\mathcal{R}$, then $\|A-P\|=R_{\|\cdot\|}(\ds)$ for any permutation matrix $P\in\Pn$.
\end{corollary}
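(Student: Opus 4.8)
The plan is to prove the two inequalities $\|A-P\|\le R_{\|\cdot\|}(\ds)$ and $\|A-P\|\ge R_{\|\cdot\|}(\ds)$ for every $P\in\Pn$. The upper bound is immediate: since every permutation matrix lies in $\ds$ and $A$ is a Chebyshev center, $\|A-P\|\le\sup_{B\in\ds}\|A-B\|=R_{\|\cdot\|}(\ds)$. The entire difficulty is concentrated in the lower bound, and here the Chebyshev-center hypothesis must be used in an essential way: the inequality fails for a generic matrix, as taking $A=I_n$ already shows.

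For the lower bound I would exploit the symmetrized center supplied by Theorem \ref{prop - centre J general}. Writing $\alpha=\frac1n\sum_{i,j=1}^n a_{ij}$, that theorem yields $R_{\|\cdot\|}(\ds)=\|\alpha J_n-I_n\|$. First I would record that $\alpha J_n$ is \emph{equidistant} from every permutation matrix: for $P\in\Pn$, permutation-invariance of the norm together with $P^{-1}J_n=J_n$ gives $\|\alpha J_n-P\|=\|P^{-1}(\alpha J_n-P)\|=\|\alpha J_n-I_n\|=R_{\|\cdot\|}(\ds)$.

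The key step is then a triangle-inequality sandwich. Using property \ref{lem - bonus3}, namely $J_n=\frac{1}{n!}\sum_{Q\in\Pn}Q$, I would express the symmetrized center as a group average, $\alpha J_n=J_nAJ_n=\frac{1}{(n!)^2}\sum_{Q_1,Q_2\in\Pn}Q_1AQ_2$. Fixing $P\in\Pn$ and applying the triangle inequality,
\[
R_{\|\cdot\|}(\ds)=\|\alpha J_n-P\|\le\frac{1}{(n!)^2}\sum_{Q_1,Q_2\in\Pn}\|Q_1AQ_2-P\|=\frac{1}{(n!)^2}\sum_{Q_1,Q_2\in\Pn}\|A-Q_1^{-1}PQ_2^{-1}\|,
\]
where the last equality is again permutation-invariance (conjugating inside the norm by $Q_1^{-1}$ and $Q_2^{-1}$). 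Each $Q_1^{-1}PQ_2^{-1}$ is a permutation matrix, so the upper bound gives $\|A-Q_1^{-1}PQ_2^{-1}\|\le R_{\|\cdot\|}(\ds)$ for every term; hence the average on the right is itself at most $R_{\|\cdot\|}(\ds)$. The chain therefore opens and closes with $R_{\|\cdot\|}(\ds)$, forcing equality throughout, and in particular each summand must equal $R_{\|\cdot\|}(\ds)$.

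Finally I would observe that for fixed $P$ the family $\{Q_1^{-1}PQ_2^{-1}:Q_1,Q_2\in\Pn\}$ exhausts $\Pn$ (already $Q_2=I_n$ with $Q_1$ ranging over $\Pn$ does so). Consequently $\|A-P'\|=R_{\|\cdot\|}(\ds)$ for every $P'\in\Pn$, which is the claim. I expect the main obstacle to be realizing that the trivial upper bound is not enough and that a matching lower bound must be manufactured; the device that resolves this is recognizing $\alpha J_n$ simultaneously as a point lying at distance exactly $R_{\|\cdot\|}(\ds)$ from every permutation matrix and as the barycenter $\frac{1}{(n!)^2}\sum Q_1AQ_2$ of the orbit of $A$, which lets the convexity inequality be squeezed to an equality term by term.
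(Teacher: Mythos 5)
Correct, and essentially the paper's approach: the paper gives no proof of this corollary itself (it cites \cite{part1} and presents it as a direct consequence of Theorem \ref{prop - centre J general}), and your argument is precisely such a derivation, combining the radius formula $R_{\|\cdot\|}(\ds)=\|J_nAJ_n-I_n\|$ with the barycenter identity $J_nAJ_n=\frac{1}{(n!)^2}\sum_{Q_1,Q_2\in\Pn}Q_1AQ_2$ and a triangle-inequality sandwich that forces $\|A-P'\|=R_{\|\cdot\|}(\ds)$ term by term. All steps check out, including the equidistance of $\alpha J_n$ from each $P\in\Pn$, the equality-from-average argument, and the exhaustion of $\Pn$ by the matrices $Q_1^{-1}PQ_2^{-1}$.
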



The above results are valid even if $\mathcal{R}$ has no direct relation to $\ds$. If we also make the natural assumption that $\mathcal{R}=\ds$, then we obtain the following concrete result.

\begin{corollary}\cite[Corollary 6.6]{part1}\label{cor - centre J}
    If $\|\cdot\|$ is a permutation-invariant norm on $M_n(\mathbb{R})$, then the special doubly stochastic matrix $J_n$ is a Chebyshev center of $\ds$ relative to the metric space $ (\ds,\|\cdot\|)$. Moreover, the associated Chebyshev radius is given by $R_{\|\cdot\|}(\ds)=\|J_n-I_n\|$.
\end{corollary}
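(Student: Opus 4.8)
The plan is to deduce this corollary directly from Theorem \ref{prop - centre J general} by specializing the constraint set to $\mathcal{R} = \ds$. The one preliminary issue to dispose of is existence: Theorem \ref{prop - centre J general} is conditional on the prior existence of a Chebyshev center, so I must first certify that one exists. Since $\ds$ is a nonempty compact subset of $M_n(\mathbb{R})$ (it is the convex hull of the finitely many permutation matrices), taking $\mathcal{R} = \ds$ in Proposition \ref{prop - exist}\,(i) immediately guarantees that a minimal bounding ball of $\ds$ relative to $(\ds,\|\cdot\|)$ exists; that is, a Chebyshev center $A \in \ds$ exists.

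With existence secured, I would invoke Theorem \ref{prop - centre J general}. Its hypotheses are met: $\ds$ is convex and permutation-invariant (as noted in the excerpt), and $\|\cdot\|$ is permutation-invariant by assumption. The theorem then yields that $J_nAJ_n = \big(\frac{1}{n}\sum_{i,j=1}^n a_{ij}\big)J_n$ is itself a Chebyshev center, and that $R_{\|\cdot\|}(\ds) = \|J_nAJ_n-I_n\|$.

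The only remaining step is to identify $J_nAJ_n$ explicitly, and here the doubly stochastic structure of $A$ does all the work. By property (ii) of Section \ref{sec - prop}, every doubly stochastic matrix $D$ satisfies $DJ_n = J_nD = J_n$. Applying this to $A \in \ds$ gives $AJ_n = J_n$, whence $J_nAJ_n = J_n(AJ_n) = J_nJ_n = J_n$, using $J_n^2 = J_n$. (Equivalently, since every row sum of $A$ equals $1$, one has $\sum_{i,j=1}^n a_{ij} = n$, so the scalar multiplier equals $1$.) Thus $J_n$ itself is a Chebyshev center of $\ds$, and substituting back gives $R_{\|\cdot\|}(\ds) = \|J_n-I_n\|$, as claimed.

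There is essentially no hard step here: the corollary is a clean specialization of the theorem, and the only computation one must verify—that the averaged center $J_nAJ_n$ collapses to $J_n$—follows at once from the defining identity $AJ_n = J_n$. If any subtlety lurks, it is merely in confirming that the unconstrained problem over $(\ds,\|\cdot\|)$ coincides with the constrained problem of Theorem \ref{prop - centre J general} taken with $\mathcal{R} = \ds$; but this is immediate, since requiring the center to lie in $\mathcal{R} = \ds$ while enclosing $\ds$ is exactly the unconstrained formulation.
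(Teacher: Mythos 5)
Your proposal is correct and follows essentially the same route the paper intends: the corollary is obtained by specializing Theorem \ref{prop - centre J general} to the constraint set $\mathcal{R}=\ds$ (with existence supplied by Proposition \ref{prop - exist}, since $\ds$ is compact), and then observing that for a doubly stochastic center $A$ one has $J_nAJ_n=\big(\tfrac{1}{n}\sum_{i,j}a_{ij}\big)J_n=J_n$, which gives $R_{\|\cdot\|}(\ds)=\|J_n-I_n\|$. Your handling of the side points (existence, and the identification of the constrained problem with the unconstrained one over $(\ds,\|\cdot\|)$) is accurate and complete.
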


\vspace{-7pt}

\section{The minimum and maximum distance of an element of the Birkhoff polytope from the origin}
\label{sec - size}

We begin our discussion by presenting a useful result on the range of the Schatten $p$-norms $\|\cdot\|_{\p}$ ($p \geq 1$) when its entry runs through the Birkhoff polytope $\ds$. Finding lower and upper bounds for this norms (as well as identifying the conditions under which these are optimal) will prove valuable later on.

\begin{proposition}\label{Prop: norm}
	Given $D\in\ds$ and $p \geq 1$, then $1 \leqslant \|D\|_{\p} \leqslant n^{1/p}$ with $\|D\|_{\p}=1$ if and only if $D=J_n$ and $\|D\|_{\p}=n^{1/p}$ if and only if $D$ is a permutation matrix.
\end{proposition}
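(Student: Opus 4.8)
The plan is to reduce the whole statement to the behaviour of the singular values $\sigma_1(D)\geq\cdots\geq\sigma_n(D)\geq 0$, the crucial preliminary fact being that $\sigma_1(D)=1$ while $\sigma_i(D)\leq 1$ for every $i$ and every $D\in\ds$. To establish this I would use that $D^T$ is doubly stochastic whenever $D$ is, and that a product of doubly stochastic matrices is again doubly stochastic; hence $D^TD\in\ds$. Since $D^TD$ is symmetric and positive semidefinite, its eigenvalues are precisely the numbers $\sigma_i^2(D)\geq 0$, and by the spectral bound recalled in Section \ref{sec - prop} they all lie in $[0,1]$. Moreover $D^TDe=D^Te=e$, so $1$ is an eigenvalue; therefore $\sigma_1^2(D)=1$ and $0\leq\sigma_i(D)\leq 1$ for all $i$.

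Granting this, the lower bound is immediate, since $\|D\|_{\p}^p=\sum_{i=1}^n\sigma_i^p(D)\geq\sigma_1^p(D)=1$. Equality forces $\sigma_i(D)=0$ for every $i\geq 2$, i.e.\ $D$ has rank one. Writing $D=uv^T$ and invoking $De=e$ and $D^Te=e$, I would deduce that both $u$ and $v$ are proportional to $e$, so that $D=c\,ee^T$; the doubly stochastic constraint then forces $c=1/n$ and hence $D=J_n$. Conversely $\|J_n\|_{\p}=1$, as $J_n$ has a single nonzero singular value, equal to $1$.

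For the upper bound, each $\sigma_i(D)\in[0,1]$ gives $\sigma_i^p(D)\leq 1$, whence $\|D\|_{\p}^p=\sum_{i=1}^n\sigma_i^p(D)\leq n$, that is $\|D\|_{\p}\leq n^{1/p}$; alternatively this follows from property \ref{prop - conv-max}, since $\|\cdot\|_{\p}$ is convex and takes the value $n^{1/p}$ at every permutation matrix. Equality requires $\sigma_i(D)=1$ for all $i$, i.e.\ $D^TD=I_n$, so $D$ is orthogonal. An orthogonal matrix with nonnegative entries is necessarily a permutation matrix: orthonormality of the columns yields $\sum_i d_{ij}d_{ik}=0$ for $j\neq k$, and nonnegativity forces at most one nonzero entry per row, which together with the unit row sums produces a $0$--$1$ matrix, hence a permutation matrix. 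The converse is clear, as $\|P\|_{\p}=n^{1/p}$ for every permutation matrix $P$.

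The only genuinely delicate point is the preliminary normalisation $\sigma_1(D)=1$; once the singular values are trapped in $[0,1]$ with largest value $1$, both inequalities become one-line estimates and the two equality characterisations are short structural arguments (rank one $\Rightarrow J_n$, orthogonal with nonnegative entries $\Rightarrow$ permutation).
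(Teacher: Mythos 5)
Your proposal is correct, and its skeleton coincides with the paper's: both proofs trap the singular values in $[0,1]$ by observing that $D^*D$ is doubly stochastic (hence has spectrum in $[0,1]$) and that $1$ is an eigenvalue of $D^*D$, forcing $\sigma_1(D)=1$; both then reduce the equality cases to ``rank one'' and ``orthogonal,'' respectively. Where you genuinely diverge is in how those two equality cases are closed. For $\|D\|_{\p}=1$ the paper invokes a lemma of Marcus asserting that $J_n$ is the only doubly stochastic matrix of rank one, whereas you prove this on the spot: writing $D=uv^{T}$ and using $De=e$, $D^{T}e=e$ to force $u,v\propto e$ and hence $D=J_n$. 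For $\|D\|_{\p}=n^{1/p}$ the paper deduces that $D$ is unitary, hence has all eigenvalues on the unit circle, and then cites the spectral characterization recalled in Section \ref{sec - prop} (a doubly stochastic matrix whose eigenvalues all have modulus $1$ is a permutation matrix), whereas you argue combinatorially that a nonnegative orthogonal matrix is a permutation: orthogonality of distinct columns plus nonnegativity kills all but one entry per row, and the row sums make that entry equal to $1$. The trade-off is clear: the paper's route is shorter because it leans on the cited lemma and on the spectral fact already stated in its preliminaries, while your route is entirely self-contained and elementary, at the cost of two short structural arguments. Both are complete; your direct verification that nonnegative orthogonal matrices are permutations is a standard and fully rigorous replacement for the spectral citation.
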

\begin{proof}
Let $D\in\ds$. The matrix $D^*D$ being real symmetric and positive semi-definite, its eigenvalues are non-negative real numbers. Furthermore, $D^*D$ being doubly stochastic, its eigenvalues are in absolute value smaller or equal to 1. Hence the singular values of $D$ lie in the interval $[0,1]$. Therefore,
\begin{align}\label{eq - ineq}
    1 \,\leq\, \sum_{i=1}^n \sigma_i^p(D) \,\leq\, n,\\[-20pt]\nonumber
\end{align}
and thus $ 1 \leq \|D\|_{\p} \leq n^{1/p}$.

\smallskip
    
Since $1$ is always an eigenvalue of the doubly stochastic matrix $D^*D$, it follows that $\sigma_1(D)=1$. So, if $1=\sum_{i=1}^n \sigma_i^p(D) = \|D\|_{\p}^p$, then $\sigma_j(D)=0$ for $2\leq j \leq n$ and thus $\operatorname{rank}(D)=1$. But M.\ Marcus showed that $J_n$ is the only doubly stochastic matrix of rank 1 \cite[Lemma 2]{Marcus1957}. Consequently $D=J_n$.

Now, if $n=\sum_{i=1}^n \sigma_i^p(D) = \|D\|_{\p}^p$, then  $\sigma_i(D)=1$ for every $1 \leq i \leq n$. Hence, the singular value decomposition of $D$ is given by $D=UI_nV^*=UV^*$, where $U$ and $V$ are unitary matrices. Thus,
$$
D^*D \,=\,(UV^*)^*(UV^*) \,=\,VU^*UV^* \,=\,VV^* \,=\,I_n.
$$
So $D$ is a unitary matrix and its eigenvalues must then all lie on the unit circle. Hence, it follows the first observation made in Section \ref{sec - prop} that $D$ is a permutation matrix. Finally, since $\|P\|_{\p}=n^{1/p}$ for every $n\times n$ permutation matrix, the conclusion follows.
\end{proof}

Note that the above result is not original. Wang, for instance, mentions it in part without proof \cite{Wang1974}.


\section{The Minimal Bounding Ball of the Birkhoff polytope}
\label{sec - ball}

We now turn our attention to the problem of characterizing the minimal radius of an enclosing ball of $\ds$ centered at $A\in M_n(\mathbb{R})$ when the ambient space is equipped with the Schatten $p$-norms. In some cases, we will make the additional assumption that the matrix $A$ is doubly stochastic; this loss of generality will be compensated by some considerably stronger results.

Finding an explicit formula for $r_{\p}(x)$ for a general $p\geq 1$ turns out to be a difficult problem. We thus restrict our attention to the case of the Frobenius norm (i.e., the Schatten $2$-norm), where the structure arising from the underlying inner product $\langle A,B\rangle_{\p[2]} := \tr(AB^*)$ allows us to derive some interesting formulas. In particular, we are able to characterize the minimal radius of a bounding ball of $\ds$ centered at $A$ in terms of, among other things, the minimal trace $\tr_{\min}(A)$. These formulas will be instrumental in an upcoming paper to better understand the diameter of $\ds$ in the case of the Frobenius norm.

\begin{theorem}\label{thm: rel_rad}
The minimal radius of a bounding ball of $\ds$ centered at $A\in M_n(\mathbb{R})$ relative to the Schatten 2-norm, denoted by  $r_{\p[2]}(A)$, is
$$ r_{\p[2]}(A) \,=\, \big( \|A\|_{\p[2]}^2+n-2\tr_{\min}(A) \big)^{1/2}.$$
\end{theorem}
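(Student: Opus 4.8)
The plan is to exploit the Hilbert-space structure underlying the Frobenius norm, since the Schatten $2$-norm is induced by the inner product $\langle A,B\rangle_{\F}=\tr(AB^*)$. Because maximizing $\|A-B\|_{\p[2]}$ over the compact set $\ds$ is the same as maximizing its square, I would first expand
\[
\|A-B\|_{\p[2]}^2 \,=\, \|A\|_{\p[2]}^2 - 2\langle A,B\rangle_{\F} + \|B\|_{\p[2]}^2 .
\]
This isolates the only $B$-dependence that is not linear, namely the term $\|B\|_{\p[2]}^2$, which I will want to control next.

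The second step is to reduce the optimization over the whole polytope to one over its extreme points. The map $B \mapsto \|A-B\|_{\p[2]}^2$ is convex, being the square of a norm precomposed with the affine map $B\mapsto A-B$; hence by the fact that a convex real-valued function on $\ds$ attains its maximum at a permutation matrix (property \ref{prop - conv-max}), we have
\[
r_{\p[2]}(A)^2 \,=\, \max_{P\in\Pn}\|A-P\|_{\p[2]}^2 .
\]
For every permutation matrix $P$ one has $\|P\|_{\p[2]}^2 = n$, a constant, so the nonlinear term drops out and the problem collapses to minimizing the linear functional $\langle A,P\rangle_{\F}$ over $\Pn$:
\[
r_{\p[2]}(A)^2 \,=\, \|A\|_{\p[2]}^2 + n - 2\min_{P\in\Pn}\langle A,P\rangle_{\F}.
\]

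The final step is to identify $\min_{P\in\Pn}\langle A,P\rangle_{\F}$ with $\tr_{\min}(A)$. Writing $\langle A,P\rangle_{\F}=\tr(AP^*)=\tr(AP^{-1})$ for a real permutation matrix $P$, and using that $P\mapsto P^{-1}$ is a bijection of $\Pn$ onto itself, I would conclude
\[
\min_{P\in\Pn}\langle A,P\rangle_{\F} \,=\, \min_{Q\in\Pn}\tr(AQ) \,=\, \tr_{\min}(A).
\]
Substituting this into the previous display and taking square roots yields the claimed formula.

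As for the difficulty, the argument is essentially routine once the convexity reduction is identified: the genuine work is replaced by the appeal to property \ref{prop - conv-max}. The one place where care is needed is the direction-of-optimization bookkeeping — \emph{maximizing} the distance corresponds to \emph{minimizing} the inner product, and one must pass from $P$ to $P^{-1}$ (equivalently, note that $P^*\in\Pn$) so that the resulting quantity is genuinely $\tr_{\min}(A)$ and not a max-trace variant. I expect this sign and transpose bookkeeping, rather than any substantive estimate, to be the main spot where an error could slip in.
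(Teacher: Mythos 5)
Your proposal is correct and follows essentially the same route as the paper's own proof: reduce the maximization over $\ds$ to the permutation matrices via the convexity property \ref{prop - conv-max}, expand the squared Frobenius norm, use $\|P\|_{\p[2]}^2=n$, and identify the remaining linear term with $\tr_{\min}(A)$. Your explicit handling of the $P\mapsto P^*$ bijection is in fact slightly more careful than the paper, which passes from $\max_{P\in\Pn}\bigl(-2\tr(AP^*)\bigr)$ to $-2\tr_{\min}(A)$ without comment.
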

\begin{proof}
    We know from property \ref{prop - conv-max} of $\ds$ (see Section \ref{sec - prop}) that the maximum of a convex real-valued function on $\ds$ is attained by a permutation matrix. Hence, since $\|A-D\|_{\p[2]}$ is a convex function with respect to the variable $D\in\ds$ for any fixed $A\in M_n(\mathbb{R})$, we have $r_{\p[2]}(A) = \max_{D\in \ds} \|A-D\|_{\p[2]} = \max_{P\in \Pn} \|A-P\|_{\p[2]}$. Moreover, if $P\in\Pn$, Proposition \ref{Prop: norm} ensures us that
    \begin{align*}
        \|A-P\|_{\p[2]}^2 \,&=\, \big\langle A-P,\, A-P \big\rangle_{\p[2]} \\
        &=\, \|A\|_{\p[2]}^2+\|P\|_{\p[2]}^2 - \big\langle A, P \big\rangle_{\p[2]} - \big\langle P, A \big\rangle_{\p[2]} \\
        &=\, \|A\|_{\p[2]}^2+\|P\|_{\p[2]}^2 -  \tr(AP^*) - \tr(PA^*) \\
        &=\, \|A\|_{\p[2]}^2+n - 2 \tr(AP^*).
    \end{align*}
    It follows that
    \[
    r_{\p[2]}^2(A) \,=\, \max_{P\in \Pn} \big(\|A\|_{\p[2]}^2+n - 2 \tr(AP^*)\big)
    \,=\, \|A\|_{\p[2]}^2+n-2\tr_{\min}(A).
    \]
    Taking the square root yield the result we sought.
\end{proof}

It is well known that there exist a connection between the space $\ds$ and the assignment problem  \cite{BURKARD2002257,mehlum2012doubly,Wang1974}. However, to the best of our knowledge, none of these links arose naturally by studying $\ds$. These papers were mostly motivated by the assignment problem and the authors found that by exploiting some of the properties of $\ds$ (i.e., the assignment polytope), they could gain some insight into their original questions. Here, we uncover a new geometric connection between these two objects of study.

However precise it may be, Theorem \ref{thm: rel_rad} is not practical since the minimal trace $\tr_{\min}(A)$ is relatively hard to compute for large $n$. If we make the additional assumption that $A\in\ds$, we can derive a more workable result from the estimations  (i.e., $0\leq \tr_{\min}(A) \leq 1$ for all $A\in\ds$) given in Lemma \ref{borne_m(D)}.

\begin{corollary}
For $D\in\ds$, we have 
\begin{equation}\label{cor - ineq}
    \big( \|D\|_{\p[2]}^2+n-2 \big)^{\frac{1}{2}} \,\leqslant\, r_{\p[2]}(D) \,\leqslant\, \big( \|D\|_{\p[2]}^2+n \big)^{\frac{1}{2}}.
\end{equation}
In particular, the ratio $r_{\p[2]}^2(D)/\!\left(\|D\|_{\p[2]}^2+n\right)$ converges uniformly to $1$ as $n\to\infty$.
\end{corollary}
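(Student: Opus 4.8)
The plan is to read off the result directly from Theorem \ref{thm: rel_rad} and the trace bounds of Lemma \ref{borne_m(D)}. Since $D\in\ds$, Theorem \ref{thm: rel_rad} gives the exact identity $r_{\p[2]}^2(D) = \|D\|_{\p[2]}^2 + n - 2\tr_{\min}(D)$. Lemma \ref{borne_m(D)} supplies $0 \leq \tr_{\min}(D) \leq 1$, hence $-2 \leq -2\tr_{\min}(D) \leq 0$. Substituting the two extreme values of $-2\tr_{\min}(D)$ into the identity yields
$$\|D\|_{\p[2]}^2 + n - 2 \,\leq\, r_{\p[2]}^2(D) \,\leq\, \|D\|_{\p[2]}^2 + n,$$
and taking square roots produces the desired two-sided estimate \eqref{cor - ineq}.

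For the uniform-convergence claim, I would divide the preceding chain by the strictly positive quantity $\|D\|_{\p[2]}^2 + n$, obtaining
$$1 - \frac{2}{\|D\|_{\p[2]}^2 + n} \,\leq\, \frac{r_{\p[2]}^2(D)}{\|D\|_{\p[2]}^2 + n} \,\leq\, 1.$$
The only point requiring care is to bound the error term $2/(\|D\|_{\p[2]}^2 + n)$ \emph{independently} of $D$, and this is exactly where Proposition \ref{Prop: norm} enters: it guarantees $\|D\|_{\p[2]} \geq 1$ for every $D\in\ds$, so that $\|D\|_{\p[2]}^2 + n \geq n+1$ and therefore $2/(\|D\|_{\p[2]}^2 + n) \leq 2/(n+1)$, a bound no longer depending on $D$.

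Combining these observations, for every $D\in\ds$ one has
$$0 \,\leq\, 1 - \frac{r_{\p[2]}^2(D)}{\|D\|_{\p[2]}^2 + n} \,\leq\, \frac{2}{n+1},$$
whence $\sup_{D\in\ds} \big| r_{\p[2]}^2(D)/(\|D\|_{\p[2]}^2 + n) - 1 \big| \leq 2/(n+1) \to 0$ as $n\to\infty$, which is precisely uniform convergence to $1$. The argument is essentially immediate once Theorem \ref{thm: rel_rad} and Lemma \ref{borne_m(D)} are in hand; the sole subtlety, and the closest thing to an obstacle, is securing the uniform lower bound on the denominator, which would fail without the observation $\|D\|_{\p[2]} \geq 1$ coming from Proposition \ref{Prop: norm}.
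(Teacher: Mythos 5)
Your proof is correct and follows essentially the same route as the paper: the two-sided bound is read off from Theorem \ref{thm: rel_rad} together with $0\leq\tr_{\min}(D)\leq 1$ from Lemma \ref{borne_m(D)}, and uniform convergence rests on the bound $\|D\|_{\p[2]}\geq 1$ from Proposition \ref{Prop: norm}. The only difference is cosmetic: the paper first shifts to the denominator $\|D\|_{\p[2]}^2+n-1$ (the midpoint of the interval, asserting that uniform convergence for one ratio is equivalent to that for the other) so as to get a symmetric error bound $1/n$, whereas you keep the stated denominator $\|D\|_{\p[2]}^2+n$ and obtain the one-sided bound $2/(n+1)$, which is slightly more direct and equally sufficient.
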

\begin{proof}
    The lower and upper bounds are obtained directly from Theorem \ref{thm: rel_rad} and Lemma~\ref{borne_m(D)}. For the second part of the proof, note that $r_{\p[2]}^2(D)/\!\left(\|D\|_{\p[2]}^2+n\right)$ converges uniformly to $1$ as $n\to\infty$ if and only if $r_{\p[2]}^2(D)/\!\left(\|D\|_{\p[2]}^2+n-1\right)$ also does. Since the computations in the second one are easier, we instead prove the latter. By squaring \eqref{cor - ineq} and dividing by $\|D\|_{\p[2]}^2+n-1$, we get
    \begin{equation*}
       1 - \frac{1}{\|D\|_{\p[2]}^2+n-1} \,\leq\, \frac{r_{\p[2]}^2(D)}{\|D\|_{\p[2]}^2+n-1} \,\leq\, 1 + \frac{1}{\|D\|_{\p[2]}^2+n-1}.
    \end{equation*}
    That is, 
    \begin{equation*}
        \left|\frac{r_{\p[2]}^2(D)}{\|D\|_{\p[2]}^2+n-1}-1\right| \,\leq\,  \frac{1}{\|D\|_{\p[2]}^2+n-1}.
    \end{equation*}
    Hence, since $\|D\|_{\p[2]}\geq 1$ by Proposition \ref{Prop: norm}, we have 
    \begin{equation*}
        \left|\frac{r_{\p[2]}^2(D)}{\|D\|_{\p[2]}^2+n-1}-1\right| \,\leq\,  \frac{1}{n}
    \end{equation*}
    and the conclusion follows.
\end{proof}

The full force of the inequalities stated will prove instrumental in Section \ref{Sec: Radius}.

Note in closing that, using the estimates for $\|\cdot\|_{\p}$ as $D$ runs through $\ds$ given in Proposition \ref{Prop: norm}, one can deduce upper and lower bounds: 
\[
n-1 \,\leq\, r^2_{\p[2]}(D) \,\leq\, 2n.
\] 
While these bounds are less precise than \eqref{cor - ineq}, they are simpler and somehow more elegant.


\section{The Chebyshev radius and center of the Birkhoff polytope}
\label{sec - Chebyshev}

\label{Sec: Radius}

We now seek to determine the Chebyshev radius $R_{\p}(\ds)$ and the Chebyshev centers of $\ds$ relative to the metric space $(M_n(\mathbb{R}),\|\cdot\|_{{\p}})$ and $(\ds,\|\cdot\|_{{\p}})$. In fact, we shall see in due course that the latter case follows almost directly from the first.

Observe that $\ds$ is compact and that the metric space $(M_n(\mathbb{R}), \|\cdot\|_{\p})$ is a reflexive Banach space for which $M_n(\mathbb{R})$ is obviously (weakly) closed on itself. Hence, Proposition \ref{prop - exist} guarantees the existence of a Chebyshev center in both of the aforementioned metric spaces. Moreover, since the Schatten $p$-norms are permutation-invariant and since $M_n(\mathbb{R})$ is convex, Theorem \ref{prop - centre J general} tells us that the matrix $\alpha J_n$ is a Chebyshev center for some $\alpha\in\mathbb{R}$ and that the Chebyshev radius is given by $\inf_{\alpha\in\mathbb{R}}\|\alpha J_n-I_n\|_{\p}$. Using these properties, we derive the following result.

\begin{theorem}\label{thm - rad_s1}
For $1< p < \infty$, the special doubly stochastic matrix $J_n$ is the unique Chebyshev center of $\ds$ relative to the metric space $(M_n(\mathbb{R}),\|\cdot\|_{\p})$ and the associated Chebyshev radius is equal to $(n-1)^{1/p}$.
\end{theorem}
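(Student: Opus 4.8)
The plan is to use the reduction already supplied by Theorem~\ref{prop - centre J general}, collapse the radius to a one-variable minimization, evaluate it by a short spectral computation, and finally promote the resulting existence to uniqueness via strict convexity.

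First I would note that since $M_n(\mathbb{R})$ is convex and permutation-invariant and each $\|\cdot\|_{\p}$ is permutation-invariant, Theorem~\ref{prop - centre J general} guarantees that some $\alpha J_n$ is a Chebyshev center and that
\[
R_{\p}(\ds) \,=\, \inf_{\alpha\in\mathbb{R}} \|\alpha J_n - I_n\|_{\p}.
\]
So it suffices to minimize $\|\alpha J_n - I_n\|_{\p}$ over $\alpha\in\mathbb{R}$. Because $\alpha J_n - I_n$ is real symmetric, its singular values are the moduli of its eigenvalues; and since $J_n$ acts as the identity on $\operatorname{span}(e)$, where $e=(1,\dots,1)^\intercal$, and as $0$ on $e^\perp$, the eigenvalues of $\alpha J_n - I_n$ are $\alpha-1$ (simple) and $-1$ (with multiplicity $n-1$). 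Hence
\[
\|\alpha J_n - I_n\|_{\p}^p \,=\, |\alpha-1|^p + (n-1),
\]
which is strictly minimized at $\alpha = 1$, with value $n-1$. As the Chebyshev center produced by Theorem~\ref{prop - centre J general} is exactly the minimizer $\alpha^\star J_n$, this forces $\alpha^\star=1$, identifying $J_n$ as a Chebyshev center and giving $R_{\p}(\ds) = (n-1)^{1/p}$.

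It then remains to establish uniqueness, for which I would appeal to Proposition~\ref{prop - uniqueness}: the set $\ds$ is compact, $M_n(\mathbb{R})$ is closed and convex, and a Chebyshev center exists (we have just exhibited $J_n$), so the only hypothesis left to verify is that $(M_n(\mathbb{R}),\|\cdot\|_{\p})$ be strictly convex. This is precisely where the restriction $1<p<\infty$ is used: for $p=1$ the trace norm is not strictly convex and uniqueness genuinely fails, which is why the statement excludes that endpoint.

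The hard part is therefore the strict convexity of $\|\cdot\|_{\p}$ for $1<p<\infty$, which is the only ingredient not already available in the excerpt. I expect to settle it by invoking the classical uniform convexity of the Schatten $p$-classes in this range---a consequence of the Clarkson-type inequalities for Schatten norms---since uniform convexity implies strict convexity. A fully self-contained derivation is more delicate, since the singular values of $A+B$ are \emph{not} simply the sums of those of $A$ and $B$, so one cannot reduce directly to the strict convexity of the $\ell^p$-norm on the singular values; for the purpose of this theorem, however, citing the established strict convexity of $\mathcal{S}_p$ is enough to close the argument through Proposition~\ref{prop - uniqueness}.
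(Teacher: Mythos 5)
Your proposal is correct and takes essentially the same route as the paper's own proof: reduction to $\inf_{\alpha\in\mathbb{R}}\|\alpha J_n-I_n\|_{\p}$ via Theorem~\ref{prop - centre J general}, the spectral computation giving singular values $1$ (multiplicity $n-1$) and $|1-\alpha|$, minimization at $\alpha=1$, and uniqueness from Proposition~\ref{prop - uniqueness} together with strict convexity of the Schatten $p$-norms for $1<p<\infty$. The only cosmetic differences are that you diagonalize the symmetric matrix $\alpha J_n-I_n$ directly instead of computing $(\alpha J_n-I_n)^*(\alpha J_n-I_n)$ as the paper does, and you source the strict convexity from Clarkson-type uniform convexity where the paper simply cites a reference for the same fact.
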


\begin{proof}
    We know that, for any $1< p <\infty$, the Chebyshev radius of $\ds$ is given by $\inf_{\alpha\in\mathbb{R}}\|\alpha J_n-I_n\|_{\p}$. Now, observe that $(\alpha J_n-I_n)^*(\alpha J_n-I_n)=I_n-(2\alpha-\alpha^2)J_n$. Moreover, a direct computation shows that the eigenvalues of $J_n$ are 1 and 0. Therefore, the singular values of $\alpha J_n-I_n$ are $1$ with multiplicity $n-1$, and $\sqrt{1-2\alpha+\alpha^2}=|1-\alpha|$ with multiplicity 1. Hence,
    \begin{align}
        \inf_{\alpha\in\mathbb{R}}\|\alpha J_n-I_n\|_{\p} \,&=\, \inf_{\alpha\in\mathbb{R}} \left(\sum_{i=1}^n \sigma_i^p(\alpha J_n-I_n)\!\right)^{\!1/p} \nonumber\\
        &=\, \inf_{\alpha\in\mathbb{R}}\left(n-1 + |1-\alpha|^p\right)^{1/p} \label{eq - schatten-1}\\
        &=\, (n-1)^{1/p}. \nonumber
    \end{align}
      Thus, $R_{\p}(\ds) =(n-1)^{1/p}$ and since the above minimum is attained when $\alpha=1$, the matrix $J_n$ is a Chebyshev center. Since the Schatten $p$-norm are strictly convex for any $1<p<\infty$ \cite{MR4272466},  it follows from Proposition \ref{prop - uniqueness} that the Chebyshev center is unique.
\end{proof}

As a matter of fact, the proof detailed above remains largely valid for $p = 1$. Indeed, only the uniqueness argument fails as the Schatten $1$-norm is not strictly convex. To establish the result for $p=1$, we need stronger tools, including the following lemma.

\begin{lemma}\label{lem - permutation_form}
    Let $A\in M_n(\mathbb{R})$. If $A$ commutes with every $n\times n$ permutation matrices, then there exist some $a,b\in\mathbb{R}$ such that $A=aI_n+bJ_n$. Moreover, if $A\in \ds$, then $a,b\geq 0$ and $a+b=1$.
\end{lemma}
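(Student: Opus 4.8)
The plan is to treat the statement in two stages: first pin down the exact form of \emph{any} matrix commuting with the whole permutation group, using only that hypothesis, and then read off the two scalar constraints from membership in $\ds$. The heart of the argument is the commutant computation; double stochasticity is then a routine substitution.

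First I would rewrite the commutation relation entrywise. Writing $P_\sigma\in\Pn$ for the permutation matrix of $\sigma\in\text{Sym}(n)$, the requirement $P_\sigma A=A P_\sigma$ for every $\sigma$ is equivalent to $P_\sigma A P_\sigma^{\top}=A$, which entrywise reads
\[
a_{\sigma(i)\,\sigma(j)} \,=\, a_{ij} \qquad (1\le i,j\le n,\ \sigma\in\text{Sym}(n)).
\]
Since the transpositions generate $\text{Sym}(n)$, it would even suffice to impose this for the swaps $T_{ij}$ already used in the proof of Lemma~\ref{borne_m(D)}, but I would simply invoke the full group. This single conjugation-invariance carries all the content.

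Next I would exploit the transitivity of $\text{Sym}(n)$ on indices. As $\text{Sym}(n)$ acts transitively on $\{1,\dots,n\}$, choosing $\sigma$ with $\sigma(i)=i'$ gives $a_{ii}=a_{i'i'}$, so the diagonal entries all equal a common value $d$. As $\text{Sym}(n)$ is $2$-transitive on ordered pairs of distinct indices (for $n\ge 2$), for any $i\ne j$ and $i'\ne j'$ one can choose $\sigma$ with $\sigma(i)=i'$ and $\sigma(j)=j'$, so $a_{ij}=a_{i'j'}$ and the off-diagonal entries all equal a common value $c$. Hence $A=(d-c)I_n+c\,(nJ_n)$, and setting $a:=d-c$ and $b:=nc$ yields $A=aI_n+bJ_n$. (The case $n=1$ is degenerate, since then $I_1=J_1$ and the representation is not unique; one may assume $n\ge 2$.)

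Finally, assuming $A\in\ds$, I would substitute the explicit entries: the diagonal entries of $aI_n+bJ_n$ equal $a+\tfrac{b}{n}$ and the off-diagonal entries equal $\tfrac{b}{n}$. Each row then sums to $a+b$, so the doubly stochastic row-sum condition gives $a+b=1$, while nonnegativity of the off-diagonal entries gives $\tfrac{b}{n}\ge 0$, that is $b\ge 0$. I do not expect any real obstacle in the structural part, which is purely the $2$-transitivity argument. The one point I would scrutinize is the sign of $a$: entrywise nonnegativity applied to the diagonal yields only $a+\tfrac{b}{n}\ge 0$, i.e. $a\ge -\tfrac{b}{n}$, so recovering the sharper claim $a\ge 0$ is exactly the delicate step and is where I would check whether an additional hypothesis (or a correction to the stated bound) is needed.
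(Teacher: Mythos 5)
Your structural argument coincides with the paper's: the paper writes the commutation relation entrywise as $a_{i\sigma(j)}=a_{\sigma^{-1}(i)j}$, then chooses permutations sending $j$ to $i$ (forcing all diagonal entries to equal a common $\alpha$) and permutations realizing $2$-transitivity on pairs of distinct indices (forcing all off-diagonal entries to equal a common $\beta$), and sets $a=\alpha-\beta$, $b=n\beta$ exactly as you do. Your derivation of $a+b=1$ and $b\ge 0$ also matches what the paper intends in its last sentence.

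The point you flag about the sign of $a$ is not a defect of your proof; it exposes a genuine error in the lemma as stated and in the paper's proof of it. For $n\ge 2$ the matrices $I_n$ and $J_n$ are linearly independent, so the representation $A=aI_n+bJ_n$ is unique, and $a$ can indeed be negative for $A\in\ds$: the matrix
\[
A \,=\, \frac{n}{n-1}J_n-\frac{1}{n-1}I_n
\]
(zero diagonal, all off-diagonal entries equal to $\frac{1}{n-1}$; for $n=2$ it is the transposition matrix $2J_2-I_2$) is doubly stochastic, commutes with every permutation matrix, and has $a=-\frac{1}{n-1}<0$ in its unique representation. The paper's justification, ``$a,b\geq 0$ since the coefficients are nonnegative,'' is a non sequitur: nonnegativity of the entries gives $b=n\beta\ge 0$ but only $a=\alpha-\beta\ge -b/n$, which is precisely the bound you derived. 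The error is harmless downstream: in the only application of the lemma (the proof of Theorem \ref{conj2}), it is applied to $A^*A$, which is not assumed doubly stochastic, so only the structural part of the lemma is invoked there, and the sign constraints actually used ($a\ge 0$, $b\ge -a$) are rederived from positive semidefiniteness rather than taken from the lemma's ``moreover'' clause.
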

\begin{proof}
    Since $AP=PA$ for every permutation matrices, we have
    \begin{equation*}
        a_{i\sigma(j)} \,=\, (AP)_{ij} \,=\, (PA)_{ij} \,=\, a_{\sigma^{-1}(i)j},  \qquad 1\leq i,j \leq n,
    \end{equation*}
    where $\sigma$ is a permutation associated with the permutation matrix $P$. First consider any permutation matrix $P$ associated with a permutation $\sigma$ satisfying $\sigma(j)=i$. Then we have
    \[
    a_{ii} \,=\, a_{i\sigma(j)} \,=\, a_{\sigma^{-1}(i)j} \,=\, a_{jj}.
    \]
    Since $i$ and $j$ were arbitrary, each entry in the diagonal of $A$ is identical.

    Now consider any permutation matrix $P$ associated with a permutation $\sigma$ satisfying $\sigma(j)=k$ and $\sigma(l)=i$, where $k\neq i$ and $l\neq j$. It is clear that for any $1\leq l,k \leq n$ satisfying $k\neq i$ and $l\neq j$, such a permutation exist. We then have
    \[
    a_{ik} \,=\, a_{i\sigma(j)} \,=\, a_{\sigma^{-1}(i)j} \,=\, a_{lj}.
    \]
    Since $k\neq i$ and $l\neq j$, we find that every off-diagonal entry of $A$ is identical. 
    Hence, $A$ has the general form 
    \begin{equation*}
        A \,=\, \begin{bmatrix}
            \alpha&\beta&\cdots&\beta \\
            \beta&\alpha&\cdots&\beta \\
            \vdots&\vdots&\ddots&\vdots\\
            \beta&\beta&\cdots&\alpha
        \end{bmatrix},
    \end{equation*}
    which can be written as $aI_n+bJ_n$, where $b=n\beta$ and $a=\alpha-\beta$. If $A$ is doubly stochastic, then $a,b\geq 0$ since the coefficients are nonnegative and $a+b=1$ since the row and column sums must be equal to $1$.
\end{proof}

Before addressing the following theorem, let us recall von Neumann's trace inequality \cite{vonNeumann1937}, which states that if $B,C\in M_n(\mathbb{R})$, then $|\tr(BC)| \leq \sum_{i=1}^n \sigma_i(B) \sigma_i(C)$, with equality if and only if 
there exist $n\times n$ unitary matrices $U$ and $V$ such that 
\begin{equation}\label{eq - vonneumann}
    B=U\operatorname{diag}(\sigma_1(B),\dots,\sigma_n(B)) V^* \quad\&\quad C = U\operatorname{diag}(\sigma_1(C),\dots,\sigma_n(C)) V^*.
\end{equation}

\begin{theorem}\label{conj2}
    $J_n$ is the unique Chebyshev center of $\ds$ relative to the metric space $(M_n(\mathbb{R}),\|\cdot\|_{\p[1]})$ and the associated Chebyshev radius is equal to $n-1$.
\end{theorem}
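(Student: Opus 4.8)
The plan is to dispatch the easy half first and then concentrate on uniqueness. The computation in \eqref{eq - schatten-1} is valid verbatim at $p=1$: $\inf_{\alpha\in\mathbb{R}}\|\alpha J_n-I_n\|_{\p[1]}=\inf_{\alpha}(n-1+|1-\alpha|)=n-1$, attained at $\alpha=1$. Hence $J_n$ is a Chebyshev center and $R_{\p[1]}(\ds)=n-1$, so the whole content of the theorem beyond Theorem \ref{thm - rad_s1} is the uniqueness claim. Since $\|\cdot\|_{\p[1]}$ is \emph{not} strictly convex, Proposition \ref{prop - uniqueness} is unavailable, so I would prove uniqueness directly, by turning the equidistance property of Corollary \ref{lem - perm} into rigid structural information about an arbitrary center $A$.

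Let $A$ be any Chebyshev center; by Corollary \ref{lem - perm}, $\|A-P\|_{\p[1]}=n-1$ for every $P\in\Pn$. The key device is to use $J_n-P$ as a dual certificate for each $P$. Writing $J_n-P=(J_n-I_n)P$, its singular values are $1$ (with multiplicity $n-1$) and $0$, so von Neumann's trace inequality gives $\tr\big((A-P)(J_n-P)^\intercal\big)\le\sum_i\sigma_i(A-P)\,\sigma_i(J_n-P)\le\sum_i\sigma_i(A-P)=\|A-P\|_{\p[1]}=n-1$. On the other hand a direct expansion, using $PJ_n=J_n$ and $PP^\intercal=I_n$, yields $\tr\big((A-P)(J_n-P)^\intercal\big)=(n-1)-\big(\tr(AP^\intercal)-\tr(AJ_n)\big)$.

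Next I would average over $\Pn$. Since $\tfrac1{n!}\sum_{P}P=J_n$, the quantity $g(P):=\tr(AP^\intercal)-\tr(AJ_n)$ has mean zero over $\Pn$; but the two displays above force $g(P)\ge 0$ for every $P$, so in fact $g\equiv 0$, i.e.\ $\tr(AP^\intercal)=\tr(AJ_n)$ for all $P$. Equivalently, every generalized diagonal sum $\sum_i a_{i\sigma(i)}$ of $A$ equals the same constant; the value-swapping argument already used in Lemma \ref{borne_m(D)} (cf.\ \eqref{E:ii=jj}) then forces $a_{ij}=u_i+v_j$, that is $A=ue^\intercal+ev^\intercal$. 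Moreover, with $g\equiv 0$ the inequalities above are equalities, and the equality case of von Neumann's inequality—together with the fact that $J_n-P$ has a zero singular value—forces the smallest singular value of $A-P$ to vanish, i.e.\ $\det(A-P)=0$ for every $P$.

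Finally I would feed the additive form into the singularity condition. Applying the matrix determinant lemma to the rank-two update $A-P=-P+[\,u\;e\,][\,e\;v\,]^\intercal$ turns $\det(A-P)=0$ into the requirement that $(Pv)^\intercal u$ be independent of $P$; this forces $u$ or $v$ to be a constant vector, so $A$ has rank one, say $A=ew^\intercal$ (the case $we^\intercal$ follows by transposition), with $\sum_i w_i=1$. For such $A$ one has $A-P=-P(I_n-ew^\intercal)$, whence $\|A-P\|_{\p[1]}=\|I_n-ew^\intercal\|_{\p[1]}$; restricting to $\operatorname{span}\{e,w\}$, the only subspace on which $I_n-ew^\intercal$ acts nontrivially, the equation $\|I_n-ew^\intercal\|_{\p[1]}=n-1$ collapses to $n\|w\|_2^2=\big(\sum_i w_i\big)^2$, the equality case of Cauchy--Schwarz, giving $w=\tfrac1n e$ and hence $A=J_n$. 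I expect the main obstacle to be precisely this conversion of ``equidistant from every permutation'' into rigidity in the absence of strict convexity: the whole argument hinges on choosing the sharp dual certificate $J_n-P$ and on the averaging identity, which simultaneously produce the additive form and the singularity of every $A-P$; once these are in hand, the determinant reduction and the Cauchy--Schwarz pinch are routine.
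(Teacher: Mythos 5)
Your proposal is correct, but it takes a genuinely different route from the paper's proof. Both arguments start from Corollary \ref{lem - perm} (any center $A$ satisfies $\|A-P\|_{\p[1]}=n-1$ for all $P\in\Pn$) and from von Neumann's trace inequality, but the paper tests against $C=I_n$, deducing $\tr(AP)\geq 1$, and then needs a second, separate averaging over $PAQ$ to pin down $\frac1n\sum_{i,j}a_{ij}=1$, the equality case \eqref{eq - vonneumann} of von Neumann to make $AP$ normal, Lemma \ref{lem - permutation_form} to get $A^*A=aI_n+bJ_n$, a square-root/polar argument leading to $J_n=UJ_nU^*$, and finally the external result of \cite{Lin2017} on unitary matrices with unit row and column sums. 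Your choice of test matrix $(J_n-P)^\intercal$, whose singular values are $n-1$ ones and one zero, short-circuits all of this: a single averaging gives $g\equiv 0$, which simultaneously yields the combinatorial rigidity (all generalized diagonal sums equal, hence $a_{ij}=u_i+v_j$ by the swap argument of Lemma \ref{borne_m(D)}) and the spectral degeneracy $\sigma_n(A-P)=0$; the matrix determinant lemma and a Cauchy--Schwarz pinch then finish the job. This buys a shorter, self-contained, essentially elementary argument (no equality-case characterization of von Neumann, no structure lemma, no appeal to \cite{Lin2017}); what it gives up is the structural information the paper's proof generates along the way (normality of $AP$, the explicit form of $A^*A$). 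Two small points to tighten in a write-up, neither of which is a gap: first, what forces $\sigma_n(A-P)=0$ is not literally the ``equality case of von Neumann'' but the collapse of the inequality $\sum_{i=1}^{n-1}\sigma_i(A-P)\leq\sum_{i=1}^{n}\sigma_i(A-P)$ inside your chain; second, the normalization $\sum_i w_i=1$ should be recorded explicitly as a consequence of $\det(A-P)=\pm\det(I_n-ew^\intercal)=\pm(1-w^\intercal e)=0$.
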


\begin{proof}
    The proof that the associated Chebyshev radius is equal to $n-1$ is identical to the proof of Theorem \ref{thm - rad_s1}. Therefore, we just show that $J_n$ is the unique Chebyshev center.

    Let $A\in M_n(\mathbb{R})$ be a Chebyshev center of $\ds$ relative to the metric space $(M_n(\mathbb{R}),\|\cdot\|_{\p[1]})$. By Corollary \ref{lem - perm}, we know that 
    \begin{equation}\label{eq - 6.4}
        \|A-P\|_{\p[1]} \,=\, n-1, \qquad  P\in \Pn.
    \end{equation}
    Moreover, von Neumann's trace inequality shows that $|\tr(B)| \leq \|B\|_{\p[1]}$ for every $B\in M_n(\mathbb{R})$. Hence, we have
    \begin{equation}\label{eq - trace}
        n-1 \,=\, \|A-P^*\|_{\p[1]} \,=\, \|AP-I_n\|_{\p[1]} \,\geq\, |\tr(AP-I_n)| \,=\, |\tr(AP)-n|.
    \end{equation}
    This means in particular that $1-n \leq \tr(AP)-n$, i.e., that $1\leq \tr(AP)$ for every permutation matrix $P$. By summing these inequalities over all permutation matrices, we get
    \begin{equation}\label{eq - trace2}
        1 \,=\, \frac{1}{n!}\sum_{P\in \Pn} \!1 \,\leq\, \frac{1}{n!}\sum_{P\in \Pn} \!\tr(AP) \,=\, \tr\!\Bigg(\!A \,\frac{1}{n!}\!\sum_{P\in \Pn} \!\!P \Bigg) =\, \tr(AJ_n) \,=\, \frac{1}{n} \sum_{i,j=1}^n a_{ij},
    \end{equation}
    where the second to last equality is due to Lemma \ref{lem - bonus3}.

    Now, using once again \eqref{eq - 6.4}, we have in particular that $n-1=\|A-P^*Q^*\|_{\p[1]}$ for every $P,Q\in\Pn$ and it follows from the permutation-invariance of the Schatten norms that $n-1=\|PAQ-I_n\|_{\p[1]}$ for any permutation matrices $P$ and $Q$. Consequently, we find with another application of Lemma \ref{lem - bonus3} that 
    \begin{align*}
        n-1 \,&=\, \frac{1}{(n!)^2} \sum_{P,Q\in \Pn} \!\|PAQ-I_n\|_{\p[1]} \,\geq\, \bigg\| \frac{1}{(n!)^2} \sum_{P,Q\in \Pn} \!(PAQ-I_n) \bigg\|_{\p[1]} \\
        &=\, \bigg\| \frac{1}{(n!)^2} \sum_{P,Q\in \Pn} \!PAQ-I_n \bigg\|_{\p[1]} \!=\, \left\| J_nAJ_n-I_n \right\|_{\p[1]} \,=\, \left\| \alpha J_n-I_n \right\|_{\p[1]} \\
        &=\, n-1 + |1-\alpha|,
    \end{align*}
    where $\alpha:= \frac{1}{n} \sum_{i,j=1}^n a_{ij}$ and the last equality is due to \eqref{eq - schatten-1}. Thus, we have $|1-\alpha|\leq 0$ and it follows that $ \frac{1}{n} \sum_{i,j=1}^n a_{ij} = \alpha = 1$. 
    Therefore, the inequality in \eqref{eq - trace2} is in fact an equality and this happens if and only if
    \begin{equation}\label{eq - trace3}
        \tr(AP) \,=\, 1, \qquad P\in\Pn.
    \end{equation}

    Moreover, it also follows that von Neumann's trace inequality in \eqref{eq - trace} is saturated. Hence, by \eqref{eq - vonneumann} (with $B=AP-I_n$ and $C=I_n$), it follows that there exist unitary matrices $U$ and $V$ such that $AP-I_n=U\operatorname{diag}(\sigma_1(AP-I_n),\dots,\sigma_n(AP-I_n)) V^*$ and $I_n=UV^*$. The latter implies that $U=V$ and thus that $AP-I_n$ is unitarily diagonalizable. 
%
    In particular, this means that $AP$ is also unitarily diagonalizable. Hence, $AP$ is normal for any permutation matrix $P$ and it follows that
    \begin{align*}
        P^*A^*AP \,=\, APP^*A^* \,=\, AA^* \,=\, A^*A.
    \end{align*}
    In other words, the matrix $A^*A$ commutes with $P$ for any permutation matrix $P$. It then follows from Lemma \ref{lem - permutation_form} that $A^*A = aI_n+bJ_n$ for some real numbers $a$ and $b$. Moreover, since $A^*A$ is positive semidefinite, the eigenvalues of $A^*A= aI_n+bJ_n$ are real and nonnegative. Since its eigenvalues are $a+b$ (simple) and $a$ (multiplicity of $n-1$), we must have $a\geq 0$ and $b\geq -a$. 
    
    We know from the unicity of the decomposition of positive definite matrices that if $B$ is a matrix such that $B^*B=aI_n+bJ_n$, then $A$ must be of the form $A=UB$, where $U$ is some unitary matrix. It is then a matter of simple computation to verify that $B=\sqrt{a}I_n+(\sqrt{a + b} - \sqrt{a})J_n=:\alpha I_n +\beta J_n$ is such a matrix and thus that $A=U(\alpha I_n +\beta J_n)$.

    Now, use once again the fact that $A^*A=AA^*$ to obtain 
    \begin{align*}
        aI_n+bJ_n \,&=\, (\alpha I_n +\beta J_n)(\alpha I_n +\beta J_n) \,=\, (\alpha I_n +\beta J_n)U^*U(\alpha I_n +\beta J_n) \,=\,  A^*A \\
        &=\, AA^* \,=\,  U(\alpha I_n +\beta J_n)(\alpha I_n +\beta J_n)U^* \,=\, U(aI_n+bJ_n)U^* \\
        &=\, aI_n + bUJ_n U^*,
    \end{align*}
    and thus that $J_n=UJ_nU^*$. Denote by $r_i$ the $i$th row sum of $U$ and by $c_i$ the $i$th column sum of $U$. From $J_n=UJ_nU^*$, a direct computation reveal that
    \begin{equation*}
        \frac{1}{n} \,=\, (J_n)_{ij} \,=\, (UJ_nU^*)_{ij} \,=\, \frac{r_ir_j}{n}, \qquad 1\leq i,j \leq n.
    \end{equation*}
    In particular, $|r_i|^2=1$ and thus, $r_i=\pm 1$ for any $i\in \{1,2,\dots,n\}$. Suppose that $r_1=1$. Then we also have $\frac{r_j}{n}=\frac{r_1r_j}{n} = \frac{1}{n}$ and thus, $r_j=1$ for every $j\in \{1,2,\dots,n\}$. Similarly, if $r_1=-1$ then $r_j=-1$ for every $j\in \{1,2,\dots,n\}$.

    Moreover, from $J_n=UJ_nU^*$ we also find that $J_nU=UJ_n$ which is equivalent to having
    \begin{align*}
        \frac{r_j}{n} \,=\, (J_nU)_{ij} \,=\, (UJ_n)_{ij} \,=\, \frac{c_i}{n}
    \end{align*}
    and thus, $r_j=c_i$ for every $i,j\in \{1,2,\dots,n\}$. Consequently, every row and column of $U$ sum to either $1$ or $-1$. Suppose that it sum to $1$. Then we know from \cite{Lin2017} that $U$ must be of the form $U=\sum_{m} c_m P_m$, where $P_m$ are permutation matrices and $c_m$ are complex numbers such that $\sum_m c_m=1$ and $\sum_m |c_m|^2=1$. If the rows and columns of $U$ sum to $-1$, then $U$ is the form $U=-\sum_{m} c_m P_m$.

    From this, it follows that
    \[
    A \,=\, U(\alpha I_n +\beta J_n) \,=\, \alpha U \pm \beta J_n,
    \]
    since $UJ_n= \pm \sum_{m} c_m P_mJ_n = \pm\sum_{m} c_m J_n = \pm J_n$. Now, recall that $\frac{1}{n} \sum_{i,j=1}^n a_{ij}=1$ and thus
    \begin{align*}
        1 \,=\, \frac{1}{n} \sum_{i,j=1}^n a_{ij} \,=\, \frac{1}{n} \alpha \sum_{i,j=1}^n u_{ij} \pm \frac{1}{n} \beta \sum_{i,j=1}^n \frac{1}{n} \,=\, \pm(\alpha + \beta ),
    \end{align*}
    where the last equality stem from the fact that each row of $U$ sum to $\pm1$. Since $\alpha=\sqrt{a}\geq 0$ and $\beta=\sqrt{a + b} - \sqrt{a} \geq 0$, it follows that $\pm=+$, i.e., that $U=\sum_{m} c_m P_m$ and thus that $\alpha+\beta=1$.

    Finally, recall \eqref{eq - trace3} which states that $\tr(AP)=1$ for every permutation matrix $P$. By the linearity of the trace and the fact that $\tr(J_nP)=\tr(J_n)=cst$, we find that $\tr(UP)$ is also constant for every permutation matrix $P$, say $\tr(UP)=k$. Therefore, since $U$ is unitary we have
    \begin{align*}
        n \,=\, \tr(I_n) \,=\, \tr(UU^*) \,=\, \tr\!\bigg(U \sum_{m} \overline{c_m} P_m^* \bigg) \,=\,  \sum_{m} \overline{c_m} \tr(U P_m^*) \,=\, k\sum_{m} \overline{c_m} \,=\, k.
    \end{align*}
    Therefore, we must have $\tr(UP)=k=n$ for every permutation matrix $P$. 
    However, we also have
    \begin{align*}
        1\,&=\, \tr(AP) \,=\, \tr((\alpha U + \beta J_n)P) \,=\, \alpha\tr(UP) + \beta \tr(J_n) \,=\,  \alpha\tr(UP) + \beta \\
        &=\, \alpha\tr(UP) + 1-\alpha,
    \end{align*}
    which means that $\alpha = \alpha \tr(UP)$ for every permutation matrix $P$. If $\alpha \neq 0$, then this implies that $1=\tr(UP)=n$, a contradiction (since we can suppose without any loss of generality that $n>1$). Therefore, $\alpha$ must be equal to $0$ and consequently, we find that
    \[
    A \,=\, \beta J_n \,=\, J_n,
    \]
    which is what we wanted to show.
\end{proof}

From the previous theorems, the particular case where the constraint set is limited to $\ds$ follows almost immediately since the Chebyshev center $J$ belong to $\ds \subseteq M_n(\mathbb{R})$.

\begin{corollary}\label{cor - unique_Schatten}
For $1\leq p < \infty$, the special doubly stochastic matrix $J_n$ is the unique Chebyshev center of $\ds$ relative to the metric space $(\ds,\|\cdot\|_{\p})$ and the associated Chebyshev radius is equal to $(n-1)^{1/p}$.
\end{corollary}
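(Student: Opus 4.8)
The plan is to deduce this constrained result directly from the two unconstrained theorems by exploiting the nesting $\ds \subseteq M_n(\mathbb{R})$. The crucial observation is that the radius function $r_{\p}(x) = \sup_{B \in \ds}\|x-B\|_{\p}$ is one and the same in both settings; only the set over which we minimize this function changes. Writing $R^{\,\ds}$ and $R^{\,M_n}$ for the Chebyshev radii with constraint sets $\ds$ and $M_n(\mathbb{R})$ respectively, we have $R^{\,M_n} = \inf_{x \in M_n(\mathbb{R})} r_{\p}(x)$ and $R^{\,\ds} = \inf_{x \in \ds} r_{\p}(x)$.

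First I would record the trivial inequality $R^{\,\ds} \geq R^{\,M_n}$, which holds because the infimum defining $R^{\,\ds}$ is taken over the smaller set $\ds$. Next I would invoke Theorems \ref{thm - rad_s1} and \ref{conj2} (covering $1<p<\infty$ and $p=1$ respectively), which together assert that for every $1 \leq p < \infty$ the unconstrained infimum $R^{\,M_n} = (n-1)^{1/p}$ is attained, uniquely, at $J_n$. The decisive point is that this minimizer already lies in $\ds$. Consequently $R^{\,\ds} \leq r_{\p}(J_n) = R^{\,M_n}$, and combined with the reverse inequality this forces $R^{\,\ds} = R^{\,M_n} = (n-1)^{1/p}$, with $J_n$ attaining it; hence $J_n$ is a Chebyshev center in the constrained setting as well.

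For uniqueness, suppose $A \in \ds$ is any Chebyshev center of $\ds$ relative to $(\ds,\|\cdot\|_{\p})$. Then $r_{\p}(A) = R^{\,\ds} = R^{\,M_n}$, so $A$ attains the unconstrained minimal radius. Since $A \in \ds \subseteq M_n(\mathbb{R})$, the matrix $A$ is therefore also a Chebyshev center of $\ds$ relative to $(M_n(\mathbb{R}),\|\cdot\|_{\p})$. The uniqueness clauses of Theorems \ref{thm - rad_s1} and \ref{conj2} then give $A = J_n$.

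There is no substantial obstacle here: all the analytic work was already carried out in the two preceding theorems, and the corollary is merely the remark that shrinking the constraint set from $M_n(\mathbb{R})$ to $\ds$ cannot lower the Chebyshev radius, while the unconstrained optimum $J_n$ happens to survive the shrinkage. The only point requiring care is to keep clear that the enclosed set $\mathcal{B}=\ds$ is identical in both problems and that it is only the constraint set $\mathcal{R}$ that varies.
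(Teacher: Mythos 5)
Your proposal is correct and follows essentially the same route as the paper: both arguments sandwich the constrained radius between the unconstrained radius and $r_{\p}(J_n)$ using $\ds \subseteq M_n(\mathbb{R})$ and the fact that the unconstrained optimum $J_n$ lies in $\ds$, and both derive uniqueness by noting that any constrained center attains the unconstrained minimum and must therefore equal $J_n$ by the uniqueness clauses of Theorems \ref{thm - rad_s1} and \ref{conj2}. The only cosmetic difference is that the paper writes the radius function as $\sup_{P\in\Pn}\|\cdot - P\|_{\p}$ over permutation matrices rather than over all of $\ds$, which is equivalent by convexity.
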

\begin{proof}
   Fix $1\leq p < \infty$ and suppose that $D_0 \in M_n(\mathbb{R})$ is a Chebyshev center of $\ds$ relative to the metric space $(M_n(\mathbb{R}),\|\cdot\|_{\p})$. If $D_0 \in \ds$, then we have
   \begin{align*}
        (n-1)^{\frac{1}{p}} &= \!\infp_{A\in M_n(\mathbb{R})} \sup_{P\in \Pn} \|A-P\|_{\p} \leq \infp_{D\in \ds}\sup_{P\in \Pn} \|D-P\|_{\p} \\
        &\leq \sup_{P\in \Pn} \|D_0-P\|_{\p} = (n-1)^{\frac{1}{p}}.
    \end{align*}
    Therefore, $R_{\p}(\ds)=(n-1)^{1/p}$. By Theorems \ref{thm - rad_s1} and \ref{conj2}, $J_n\in\ds$ is the only $n\times n$ real matrix satisfying $\sup_{P\in \Pn} \|D-P\|_{\p} \,=\, (n-1)^{1/p}$ and in particular, it is the only doubly stochastic matrix satisfying this equation. Consequently, $J_n$ is the unique Chebyshev center of $\ds$ relative to the metric space $(\ds,\|\cdot\|_{\p})$.
\end{proof}

\begin{remark}
    More generally, suppose that $\mathcal{B}$ be a nonempty closed bounded subset in the metric space $(V,\|\cdot\|)$, and let $\mathcal{R}_1,\mathcal{R}_2\subseteq V$ be two nonempty closed constraint set. If $\mathcal{R}_1\subseteq \mathcal{R}_2$ and $A$ is the unique Chebyshev center of $\mathcal{B}$ relative to the metric space $(V,\|\cdot\|)$ and the constraint set $\mathcal{R}_2$, then $A$ is also the unique Chebyshev center of $\mathcal{B}$ relative to the metric space $(V,\|\cdot\|)$ and the constraint set $\mathcal{R}_1$. Moreover, the Chebyshev radius is equal in both settings.
\end{remark}

\section{Concluding Remarks}

To conclude this paper, we state a few remarks and set out some open questions that appear to be of interest.

\begin{enumerate}[label=(\roman*)]
    \item Can we find formulas for the minimal radius of a bounding ball of $\ds$ centered at $D\in\ds$ relative to the Schatten $p$-norm for $p\neq2$, and in particular for $p=1$ and $p=\infty$?

    \item The problem of determining $\tr_{\min}(D)$ is an important one in the area of combinatorial optimization. Is it possible to find an efficient algorithm to compute $\tr_{\min}(D)$ for $D\in\ds$ by using the fact that $\tr_{\min}(D) = (\|D\|_{\p[2]}^2+n-r_{\p[2]}^2(D))/2$?



\end{enumerate}

\bibliographystyle{plain}
\bibliography{Part2_Schatten}

\end{document}